\newtheorem{theorem}{Theorem}[section]
\newtheorem{example}[theorem]{Example}
\newtheorem{remark}[theorem]{Remark}
\newtheorem{proposition}[theorem]{Proposition}
\newtheorem{corollary}[theorem]{Corollary}
\numberwithin{equation}{section}
\begin{document}
\title{Quadratic Embedding Constants of Graph Joins%
\footnote{Supported by the National Natural Science
Foundation of China (Grant Nos. 12061074, 11971274)
and JSPS Grant-in-Aid for Scientific Research (B) 19H01789.}}
\author{
{\small Zhenzhen Lou$^{a, c}$, \ \ Nobuaki Obata$^{b,}$\footnote{Corresponding author.
\newline{\it \hspace*{5mm}Email addresses:}
xjdxlzz@163.com (Z. Z. Lou),
obata@tohoku.ac.jp (N. Obata),
huangqx@xju.edu.cn (Q. X. Huang).
}\;,\; \ \
Qiongxiang Huang$^c$}\\[2mm]
\footnotesize $^a$ College of Science, University of Shanghai for Science and Technology, Shanghai, 200093, China\\
\footnotesize $^b$ Graduate School of Information Sciences,
Tohoku University, Sendai 980-8579, Japan\\
\footnotesize $^c$ College of Mathematics and Systems Science, Xinjiang University,
Urumqi, Xinjiang 830046, China}
\date{}
\maketitle
\begin{abstract}
The quadratic embedding constant (QE constant) of a graph
is a new characteristic value of a graph
defined through the distance matrix.
We derive formulae for the QE constants
of the join of two regular graphs, double graphs and
certain lexicographic product graphs.
Examples include complete bipartite graphs,
wheel graphs, friendship graphs, completely split graph,
and some graphs associated to strongly regular graphs.
\\
\noindent\textit{AMS classification:} primary 05C50;
secondary 05C12 05C76
\\
\noindent\textit{Keywords:}
distance matrix,
double graph,
graph join,
lexicographic product graph,
quadratic embedding constant,
strongly regular graphs
\end{abstract}

\section{Introduction}\label{se-1}

In the study of spectral characteristics of a graph
the adjacency matrix and the Laplacian matrix have played
central roles, see e.g.,
\cite{BiyikogluLeydold2007,Brouwer-Haemers2012,Cvetkovic1980,VanMieghem2011}.
For further development of spectral graph theory
there is growing interest in the distance matrix
keeping a profound relation to Euclidean distance geometry.
For some recent works on distance spectrum see e.g.,
\cite{Aouchiche-Hansen2014, Lin2013,Lin2015,Liu-Xue-Guo2015}.
The quadratic embedding constant (QE constant) of a graph,
recently introduced by means of the distance matrix,
is a new characteristic value of a graph \cite{Obata-Zakiyyah2018}.
In this paper we derive formulae for the QE constants
of the join of two regular graphs, double graphs and
certain lexicographic product graphs.
Moreover, we give some computational examples related to strongly
regular graphs.
Our results support potential interest in classifying
graphs in terms of the QE constants.

Let $G=(V,E)$ be a finite connected graph
(always assumed to be simple and undirected),
where $V$ is the set of vertices and $E$ the set of edges.
Let $d(x,y)=d_G(x,y)$ be the \textit{distance}
between two vertices $x,y\in V$,
that is, the length of a shortest path connecting them,
and $D=[d(x,y)]_{x,y\in V}$ the distance matrix.
Various realizations of a graph in a Euclidean space
or in a more general metric space have been discussed
from different aspects,
see \cite{Deza-Laurent1997} and references cited therein,
and also \cite{Graham-Winkler1985,
Koolen-Shpectorov1994, Maehata2013}.
A \textit{quadratic embedding} of a graph $G=(V,E)$
in a Euclidean space $\mathbb{R}^N$ is a map
$\varphi:V\rightarrow \mathbb{R}^N$ satisfying
\[
\|\varphi(x)-\varphi(y)\|^2
=d(x,y),
\qquad
x,y\in V,
\]
where the left-hand side is the square of the Euclidean distance
between two points $\varphi(x)$ and $\varphi(y)$.
A graph $G$ is called
\textit{of QE class} or \textit{of non-QE class} according as
$G$ admits a quadratic embedding or not.

The concept of quadratic embedding traces back to the early works of
Schoenberg \cite{Schoenberg1935,Schoenberg1938}
(see also \cite{Young-Householder1938})
and has been studied considerably
along with Euclidean distance geometry
\cite{Alfakih2018, Balaji-Bapat2007,
Jaklic-Modic2013,Jaklic-Modic2014, Liberti-etal2014}.
Schoenberg's theorem says that
a finite connected graph $G$ is of QE class
if and only if the distance matrix  $D$ is
conditionally negative definite, namely,
$\langle f, Df\rangle\le0$ for all $f\in C(V)$ with
$\langle \mathbf{1},f\rangle=0$,
where $C(V)$ is the space of $\mathbb{R}$-valued functions on $V$,
$\mathbf{1}\in C(V)$ the constant function taking value one,
and $\langle\cdot,\cdot\rangle$
the canonical inner product on $C(V)$.
Moreover, it is noteworthy that
$D$ is conditionally negative definite
if and only if the $Q$-matrix $Q=[q^{d(x,y)}]{x,y\in V}$ is
positive definite (allowing zero eigenvalues)
for all $0\le q\le1$.
The last condition is essential for noncommutative
harmonic analysis on free groups \cite{Bozejko89} and
$q$-deformed spectral analysis of growing graphs
\cite{Hora-Obata2007}.

A new quantitative approach was initiated
in the recent paper \cite{Obata-Zakiyyah2018}.
For a finite connected graph $G=(V,E)$ with $|V|\ge2$
the \textit{quadratic embedding constant} (QE constant for short)
is defined by
\begin{equation}\label{1eqn:def of QEC}
\mathrm{QEC}(G)
=\max\{\langle f,Df\rangle\,;\,  f\in C(V), \,
\langle f,f\rangle=1, \,
\langle \mathbf{1},f\rangle=0\}.
\end{equation}
By definition, a graph on at least two vertices
admits a quadratic embedding if and only if $\mathrm{QEC}(G)\le0$.
Thus, the QE constant gives rise to a criterion for
graphs to be of QE class or not.
Moreover, the QE constant is interesting for itself
as a new numerical invariant of graphs.
In the previous papers
\cite{MO-2018,Obata2017,Obata-Zakiyyah2018}
we obtained explicit values of the QE constants
of particular graphs
and their estimates in relation to graph operations.
More concrete examples of QE constants have been
obtained in the recent papers
\cite{Irawan-Sugeng2021,M-2022,Purwaningsih-Sugeng2021}.
We are also interested in classifying
the finite connected graphs in terms of the QE constants,
see e.g., \cite{BO-2019}, where
we started an attempt to characterize graphs along with
the increasing sequence of the QE constants of paths.

Another interesting aspect of the QE constant
is found in a relation to the minimal eigenvalue of
the adjacency matrix.
In fact, for a connected strongly regular graph $G$
we will derive the simple relation:
\[
\mathrm{QEC}(G)=-2-\lambda_{\min}(G),
\]
see Propositions \ref{02prop:QEC(regular and diam<=2}
and \ref{05prop:QEC(SRG)}.
Therefore, for a connected strongly regular graph $G$
the condition
$\mathrm{QEC}(G)\le0$ is equivalent to $\lambda_{\min}(G)\ge-2$.
On the other hand,
graphs with smallest eigenvalue at least $-2$ have been
extensively studied and strongly regular graphs
with such property are classified
\cite{Brouwer-Cohen-Neumaier1989,
Cameron1976,Cheng-Greaves-Koolen2018}.
In this aspect too the QE constant is expected
to provide an interesting characteristic value of a graph.

Finally, as is expected from the definition \eqref{1eqn:def of QEC}
the QE constant is closely related to the distance spectrum,
for the comprehensive account
see \cite{Aouchiche-Hansen2014} and references cited therein.
The eigenvalues of the distance matrix $D$ are arranged as
\[
\delta_1(G)>\delta_2(G)\ge\delta_3(G)\ge\dotsb\ge \delta_n(G)\,,
\quad n=|V|,
\]
where the first strict inequality,
due to the Perron-Frobenius theorem,
means that the maximal eigenvalue of $D$ is simple.
In this line a remarkable relation to the QE constant is
shown by the inequalities:
\[
\delta_2(G)\le \mathrm{QEC}(G)<\delta_1(G),
\]
the proof of which is by the standard min-max theorem.
It is an interesting open question to
characterize graphs with $\delta_2(G)=\mathrm{QEC}(G)$.

This paper is organized as follows.
In Section 2 we review some methods for
calculating the QE constants as well as their basic properties.
In Section 3 we derive a formula for the QE constant
of the join $G_1+G_2$ of regular graphs $G_1$ and $G_2$ (Theorem \ref{03thm:QEC(G_1+G_2)})
and show typical examples.
In Section 4 we discuss
the double graph $\mathrm{Double}(G)$ and
lexicographic product graph $G\triangleright K_2$,
which are subgraphs of the graph join $G+G$.
We see that the formulae for their QE constants
(Theorems \ref{4thm:QEC(Double)} and \ref{4thm:QEC(lexicographic)})
are compatible to the ones for distance spectra
\cite{Indulal-Gutman2008}.
In Section 5 we examine some strongly regular graphs and their
graph joins,
and construct families of graphs which are not of QE class
(Theorem \ref{5-thm-1}).

\section{Preliminaries}

\subsection{Some Basic Properties of QE Constants}

As first examples, for the complete graphs
and complete bipartite graphs we have
\begin{align}
&\mathrm{QEC}(K_n)=-1,
\qquad n\ge2,
\label{02eqn:QE(K_n)}\\
&\mathrm{QEC}(K_{m,n})=\frac{2(mn-m-n)}{m+n}\,,
\qquad m\ge1, \quad n\ge1.
\label{02eqn:QE(K_mn)}
\end{align}
For cycles we have
\begin{equation}
\mathrm{QEC}(C_n)=
\begin{cases}
-\left(4\cos^2\dfrac{\pi}{n}\right)^{-1},
& \text{if $n\ge3$ is odd}, \\
0, & \text{if $n\ge4$ is even}.
\end{cases}
\end{equation}
For paths $\mathrm{QEC}(P_n)$ can be seen \cite{M-2022} and a  sharp estimate can be seen \cite{MO-2018}.
More examples of QE constants can be found in
\cite{BO-2019,Obata2017,Obata-Zakiyyah2018}.

Let $G=(V,E)$ be a finite connected graph
and $H=(W,F)$ a connected subgraph with $|W|\ge2$.
Let $D_G$ and $D_H$ be the distance matrices of $G$ and $H$,
respectively.
If $H$ is isometrically embedded in $G$, that is,
\[
d_H(x,y)=d_G(x,y),
\qquad x,y\in W,
\]
the distance matrix $D_H$ becomes a submatrix of $D_G$
and by definition we have
\[
\mathrm{QEC}(H)\le \mathrm{QEC}(G).
\]
In particular, every
finite connected graph $G=(V,E)$ with $|V|\ge2$ fulfills
\[
-1\le \mathrm{QEC}(G),
\]
since $G$ contains $K_2$ isometrically and
$\mathrm{QEC}(K_2)=-1$.
Moreover, if $G=(V,E)$ is not complete,
it contains $P_3$ isometrically and we have
\[
\mathrm{QEC}(P_3)=-\frac23\le \mathrm{QEC}(G).
\]
Hence $\mathrm{QEC}(G)=-1$ if and only if $G$ is a complete graph.

\subsection{Formulae for QE Constants}
\label{subsec:Formulae for QE Constants}

We first recall a general formula for
the QE constant \eqref{1eqn:def of QEC} for
a graph $G=(V,E)$ with $V=\{1,2,\dots,n\}$, $n\ge3$.
Following the method of Lagrange's multipliers we consider
\begin{equation}\label{01eqn:def of F}
F(f,\lambda,\mu)
=\langle f,Df\rangle
-\lambda(\langle f,f\rangle-1)
-\mu \langle\mathbf{1},f\rangle.
\end{equation}
Identifying $f\in C(V)$ with $[f_i]\in \mathbb{R}^n$ in
such a way that $f(i)=f_i$, we regard
$F(f,\lambda,\mu)$ as a function of
$(f=[f_i],\lambda,\mu)\in\mathbb{R}^n\times \mathbb{R}\times\mathbb{R}$.
Let $\mathcal{S}(D)$ be the set of all stationary points
of $F(f,\lambda,\mu)$,
that is, the set of $(f,\lambda,\mu)
\in\mathbb{R}^n\times\mathbb{R}\times\mathbb{R}$ such that
\begin{equation}\label{01eqn:stationary points}
\frac{\partial F}{\partial f_1}
=\dots
=\frac{\partial F}{\partial f_n}
=\frac{\partial F}{\partial \lambda}
=\frac{\partial F}{\partial \mu}
=0.
\end{equation}
The above equations are written explicitly as follows:
\begin{equation}\label{01eqn:LM1}
\frac{\partial F}{\partial f_i}
=2Df(i)-2\lambda f(i)-\mu=0,
\quad i=1,2,\dots,n,
\end{equation}
or equivalently,
\begin{equation}\label{01eqn:LM1-1}
(D-\lambda)f=\frac{\mu}{2}\,\bm{1},
\end{equation}
and
\begin{align}
\langle f,f\rangle=1,
\label{01eqn:LM2}\\
\langle \mathbf{1},f \rangle=0.
\label{01eqn:LM3}
\end{align}
We then see that $\mathcal{S}(D)$ is the set of
$(f,\lambda,\mu)\in\mathbb{R}^n\times\mathbb{R}\times\mathbb{R}$
satisfying \eqref{01eqn:LM1-1}--\eqref{01eqn:LM3}.

\begin{proposition}[\cite{Obata-Zakiyyah2018}]
\label{02prop:QEC}
Let $G=(V,E)$ be a finite connected graph
on $n=|V|\ge3$ vertices.
Then we have
\[
\mathrm{QEC}(G)=\max\Lambda(D),
\]
where $\Lambda(D)$ is the projection of
$\mathcal{S}(D)$ onto the $\lambda$-axis,
that is, the set of $\lambda\in\mathbb{R}$ such that
$(f,\lambda,\mu)\in\mathcal{S}(D)$ for some
$f\in\mathbb{R}^n$ and $\mu\in\mathbb{R}$.
\end{proposition}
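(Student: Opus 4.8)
The plan is to read \eqref{1eqn:def of QEC} as the problem of maximizing the quadratic form $f\mapsto\langle f,Df\rangle$ over the feasible set
\[
\mathcal{F}=\{f\in\mathbb{R}^n:\langle f,f\rangle=1,\ \langle \mathbf{1},f\rangle=0\},
\]
and to identify the attained maximum with a Lagrange multiplier $\lambda$. First I would observe that $\mathcal{F}$, being the intersection of the unit sphere with the hyperplane orthogonal to $\mathbf{1}$, is compact and (since $n\ge3$) nonempty; as $\langle f,Df\rangle$ is continuous, the maximum $\mathrm{QEC}(G)$ is genuinely attained at some $f^{*}\in\mathcal{F}$. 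The strategy is then to prove the two inequalities $\max\Lambda(D)\le\mathrm{QEC}(G)$ and $\mathrm{QEC}(G)\le\max\Lambda(D)$ separately, the second also establishing that the maximum on the right-hand side exists.

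For the inequality $\max\Lambda(D)\le\mathrm{QEC}(G)$, take any $(f,\lambda,\mu)\in\mathcal{S}(D)$, so that \eqref{01eqn:LM1-1}--\eqref{01eqn:LM3} hold. Pairing \eqref{01eqn:LM1-1} with $f$ and invoking the constraints \eqref{01eqn:LM2} and \eqref{01eqn:LM3} gives
\[
\langle f,Df\rangle-\lambda=\langle f,(D-\lambda)f\rangle=\frac{\mu}{2}\langle f,\mathbf{1}\rangle=0,
\]
so that $\lambda=\langle f,Df\rangle$. Since such an $f$ lies in $\mathcal{F}$ by \eqref{01eqn:LM2}--\eqref{01eqn:LM3}, we conclude $\lambda=\langle f,Df\rangle\le\mathrm{QEC}(G)$ for every $\lambda\in\Lambda(D)$.

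For the reverse inequality I would apply the classical Lagrange multiplier theorem at the maximizer $f^{*}$: since the objective and both constraints are smooth (indeed polynomial), there exist $\lambda^{*},\mu^{*}$ with $(f^{*},\lambda^{*},\mu^{*})\in\mathcal{S}(D)$, and then by the computation above $\lambda^{*}=\langle f^{*},Df^{*}\rangle=\mathrm{QEC}(G)$, whence $\mathrm{QEC}(G)\in\Lambda(D)$. The one point that requires genuine care --- the main obstacle --- is the constraint qualification needed to invoke Lagrange's theorem: the gradients of the two constraints at $f^{*}$, namely $2f^{*}$ and $\mathbf{1}$, must be linearly independent. This holds at \emph{every} point of $\mathcal{F}$, because if $f=c\mathbf{1}$ for some scalar $c$ then $\langle\mathbf{1},f\rangle=cn=0$ forces $c=0$, contradicting $\langle f,f\rangle=1$; hence no feasible point is a multiple of $\mathbf{1}$, and the two gradients are never proportional. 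Combining the two inequalities shows that $\Lambda(D)$ is nonempty, bounded above by $\mathrm{QEC}(G)$, and contains $\mathrm{QEC}(G)$, so $\max\Lambda(D)=\mathrm{QEC}(G)$, as claimed.
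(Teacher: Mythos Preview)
Your proof is correct. The paper does not actually prove this proposition; it merely states the result with a citation to \cite{Obata-Zakiyyah2018}, after setting up the Lagrange function \eqref{01eqn:def of F} and the stationary conditions \eqref{01eqn:LM1-1}--\eqref{01eqn:LM3}. Your argument is precisely the standard Lagrange-multiplier proof that this setup anticipates, including the verification of the constraint qualification, so it fills in exactly what the paper omits.
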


The \textit{diameter} of
a finite connected graph $G=(V,E)$ is defined by
\[
\mathrm{diam}(G)=
\max\{d(x,y)\,;\, x,y\in V\}.
\]
Recall that the adjacency matrix $A=[a(x,y)]_{x,y\in V}$ of $G$ is
defined by setting $a(x,y)=1$ if $x$ and $y$ are adjacent
and $a(x,y)=0$ otherwise.
If $1\leq \mathrm{diam}(G)\leq2$,
then $|V|\ge2$ and we have
\begin{equation}\label{1eqn:D}
D=2J-2I-A,
\end{equation}
where $J$ is the matrix whose entries are all one,
and $I$ the identity matrix.
(The symbols $J$ and $I$ are used without
specifying their sizes.)

\begin{proposition}[\cite{Obata2017}]\label{2-eq-1}
For a finite connected graph $G$ with $1\leq \mathrm{diam}(G)\leq2$
we have
\begin{equation}\label{2eqn:in Prop2.2}
\mathrm{QEC}(G)
=-2-\min\left\{\langle f,Af\rangle \,;\,
 f\in C(V), \,
 \langle f,f\rangle=1, \,
 \langle \mathbf{1},f\rangle=0
 \right\}.
\end{equation}
\end{proposition}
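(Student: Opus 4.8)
The plan is to start from the explicit formula for the distance matrix of a graph with diameter at most $2$, namely $D = 2J - 2I - A$ given in \eqref{1eqn:D}, and substitute it directly into the definition \eqref{1eqn:def of QEC} of the QE constant. For any test function $f \in C(V)$ satisfying the two constraints $\langle f, f\rangle = 1$ and $\langle \mathbf{1}, f\rangle = 0$, I would compute $\langle f, Df\rangle$ term by term using linearity of the inner product in the second argument.

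The key observation is that the two constraints make two of the three terms collapse. First, $\langle f, Jf\rangle = \langle f, \langle \mathbf{1}, f\rangle\,\mathbf{1}\rangle = \langle \mathbf{1}, f\rangle \langle f, \mathbf{1}\rangle = 0$, since $Jf = \langle \mathbf{1}, f\rangle\,\mathbf{1}$ and the orthogonality constraint \eqref{01eqn:LM3} forces $\langle \mathbf{1}, f\rangle = 0$. Second, $\langle f, If\rangle = \langle f, f\rangle = 1$ by the normalization constraint \eqref{01eqn:LM2}. Therefore, for every admissible $f$ we obtain the identity
\[
\langle f, Df\rangle = 2\langle f, Jf\rangle - 2\langle f, If\rangle - \langle f, Af\rangle = -2 - \langle f, Af\rangle.
\]

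Once this pointwise identity is in hand, the proof is essentially finished: taking the maximum over all admissible $f$ turns the maximum of $\langle f, Df\rangle$ into $-2$ plus the maximum of $-\langle f, Af\rangle$, and maximizing $-\langle f, Af\rangle$ is the same as minimizing $\langle f, Af\rangle$ over the same constraint set. This yields exactly \eqref{2eqn:in Prop2.2}. The only subtlety worth flagging is the passage from the maximum to the minimum, but since $\max_f(-\langle f, Af\rangle) = -\min_f \langle f, Af\rangle$ over an identical admissible set, this is immediate.

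I do not anticipate a genuine obstacle here, as the argument is a direct substitution exploiting both constraints; the mild care needed is simply to verify that the admissible set for $f$ is literally the same on both sides of the equation, so that the optimization transfers cleanly. One should also confirm that $|V| \geq 2$ (guaranteed by the hypothesis $\mathrm{diam}(G) \geq 1$) so that the optimization problem is nonvacuous and the formula \eqref{1eqn:D} applies.
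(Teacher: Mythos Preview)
Your argument is correct: the substitution of \eqref{1eqn:D} into the definition \eqref{1eqn:def of QEC}, followed by the observation that the constraints kill the $J$-term and fix the $I$-term, is exactly the right computation, and the passage from $\max(-\langle f,Af\rangle)$ to $-\min\langle f,Af\rangle$ is unproblematic since the admissible set is identical on both sides. Note that the paper itself does not supply a proof of this proposition---it is quoted from \cite{Obata2017}---so there is nothing further to compare against; your direct-substitution argument is the standard one and matches what one would expect in the cited reference.
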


\begin{proposition}\label{02prop:QEC(regular and diam<=2}
For a finite connected regular graph $G$
with $1\leq \mathrm{diam}(G)\leq2$ we have
\begin{equation}\label{2eqn:in Prop2.3}
\mathrm{QEC}(G)
=-2-\lambda_{\mathrm{min}}(G),
\end{equation}
where $\lambda_{\mathrm{min}}(G)=\min\mathrm{ev}(A)$ is
the minimal eigenvalue of the adjacency matrix $A$.
\end{proposition}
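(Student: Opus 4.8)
The plan is to invoke Proposition \ref{2-eq-1} and thereby reduce the assertion to the purely spectral identity
\[
\min\{\langle f,Af\rangle : f\in C(V),\ \langle f,f\rangle=1,\ \langle\mathbf{1},f\rangle=0\}
=\lambda_{\min}(G).
\]
Since $G$ is $k$-regular for some $k\ge1$ (because $\mathrm{diam}(G)\ge1$ forces at least one edge), every row sum of the adjacency matrix equals $k$, so $A\mathbf{1}=k\mathbf{1}$; that is, $\mathbf{1}$ is an eigenvector of the symmetric matrix $A$ with eigenvalue $k$. As $G$ is connected, the Perron--Frobenius theorem guarantees that $k$ is the largest eigenvalue of $A$ and that it is simple, so $\mathbf{1}$ spans the top eigenspace.

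First I would note that the constraint $\langle\mathbf{1},f\rangle=0$ confines $f$ to the hyperplane $\mathbf{1}^{\perp}$. Because $A$ is symmetric and $\mathbf{1}$ is an eigenvector, $\mathbf{1}^{\perp}$ is an $A$-invariant subspace, and the Rayleigh--Ritz principle identifies the minimum of $\langle f,Af\rangle$ over unit vectors in $\mathbf{1}^{\perp}$ with the smallest eigenvalue of the restriction $A|_{\mathbf{1}^{\perp}}$. In particular this minimum is at least $\lambda_{\min}(G)$, since the Rayleigh quotient of $A$ is bounded below by $\lambda_{\min}(G)$ on all of $C(V)$; this yields one of the two inequalities for free.

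For the reverse inequality I would show that $\lambda_{\min}(G)$ is genuinely attained inside $\mathbf{1}^{\perp}$. Since $\mathrm{tr}\,A=0$ and the simple top eigenvalue $k$ is positive, the remaining eigenvalues sum to $-k<0$, which forces $\lambda_{\min}(G)<0<k$. Thus the $\lambda_{\min}$-eigenspace belongs to an eigenvalue distinct from $k$, and eigenspaces of a symmetric matrix for distinct eigenvalues are mutually orthogonal; hence every $\lambda_{\min}$-eigenvector $f$ satisfies $\langle\mathbf{1},f\rangle=0$. Normalizing such an $f$ produces a feasible vector realizing $\langle f,Af\rangle=\lambda_{\min}(G)$, so the constrained minimum equals $\lambda_{\min}(G)$. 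Substituting this into Proposition \ref{2-eq-1} gives $\mathrm{QEC}(G)=-2-\lambda_{\min}(G)$, as required.

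The step I expect to demand the most care is verifying that an eigenvector for $\lambda_{\min}(G)$ actually lies in the constraint hyperplane $\mathbf{1}^{\perp}$; everything hinges on the strict separation $\lambda_{\min}(G)<k$, which itself relies on connectedness (simplicity of the Perron eigenvalue $k$) together with the vanishing trace of $A$. Once that separation is in hand, the orthogonality of distinct eigenspaces does the rest.
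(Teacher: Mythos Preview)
Your argument is correct and follows essentially the same route as the paper: invoke Proposition~\ref{2-eq-1}, then use that for a connected $r$-regular graph $\mathbf{1}$ is the Perron eigenvector with simple eigenvalue $r$, so the constrained minimum of $\langle f,Af\rangle$ over $\mathbf{1}^\perp$ equals $\lambda_{\min}(G)$. The paper compresses this last step into the word ``immediately,'' while you spell out the Rayleigh--Ritz reasoning and the orthogonality of eigenspaces; your trace argument to force $\lambda_{\min}(G)<k$ is a pleasant flourish but not strictly needed, since simplicity of $k=r$ together with $|V|\ge2$ already guarantees a second eigenvalue strictly below $r$.
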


\begin{proof}
If $G$ is a regular connected graph of degree $r\ge0$,
we have $r\in\mathrm{ev}(A)$ and $A\bm{1}=r\bm{1}$.
Moreover, it is known that $r$ is the maximal eigenvalue of $A$
and is simple.
Then, \eqref{2eqn:in Prop2.3} follows from
\eqref{2eqn:in Prop2.2} immediately.
\end{proof}

Obviously, $\mathrm{diam}(G)=1$ if and only if
$G$ is a complete graph $G=K_n$ with $n\ge2$.
Then $\mathrm{QEC}(K_n)=-1$ follows also from
Proposition \ref{02prop:QEC(regular and diam<=2}.
Graphs with $\mathrm{diam}(G)=2$ are interesting
from several points of view, e.g., \cite{Brouwer-Cohen-Neumaier1989}.
We will discuss in Section \ref{sec:Strongly Regular Graphs}
some topics related to strongly regular graphs.

\subsection{Graph Join}

For $i=1,2$ let $G_i=(V_i,E_i)$ be
a finite graph with $V_1\cap V_2=\emptyset$,
and $A_i$ its adjacency matrix.
Set
\begin{align*}
\Tilde{V}&=V_1\cup V_2, \\
\Tilde{E}&=E_1\cup E_2 \cup\{\{x,y\}\,;\,x\in V_1, y\in V_2\}.
\end{align*}
Then $\Tilde{G}=(\Tilde{V},\Tilde{E})$ becomes
a graph, which is called the
\textit{join} of $G_1$ and $G_2$,
and is denoted by
\[
\Tilde{G}=G_1+G_2\,.
\]
With the natural arrangement of rows and columns
the adjacency matrix of $\Tilde{G}=G_1+G_2$ is written
in a block-matrix form:
\[
\Tilde{A}=\begin{bmatrix}
  A_1 & J \\
   J  & A_2
\end{bmatrix}.
\]
Note the graph join $G_1+G_2$ becomes connected
even when $G_1$ and $G_2$ are not connected.
Moreover, we have $1\leq \mathrm{diam}(G_1+G_2)\leq2$.
We then see from \eqref{1eqn:D} that
the distance matrix of $\Tilde{G}=G_1+G_2$ is given by
\begin{equation}\label{2eqn:distance matrix of join}
\Tilde{D}=2J-2I-\Tilde{A}
=\begin{bmatrix}
  2J-2I-A_1 & J \\
   J  & 2J-2I-A_2
\end{bmatrix}.
\end{equation}

For the QE constant of $\Tilde{G}=G_1+G_2$ we
need to investigate $\mathcal{S}(\Tilde{D})$,
the set of stationary points of
\[
F(f,\lambda,\mu)
=\langle f,\Tilde{D} f\rangle
-\lambda(\langle f,f\rangle-1)
-\mu\langle \bm{1},f\rangle,
\qquad
f\in C(\Tilde{V}),\,\,
\lambda\in\mathbb{R},\,\,
\mu\in\mathbb{R}.
\]
According to the block-matrix notation in
\eqref{2eqn:distance matrix of join} we write
\[
f=\begin{bmatrix} g \\ h \end{bmatrix},
\qquad g\in C(V_1),
\quad h \in C(V_2).
\]
Then after simple algebra, we see that
$\mathcal{S}(\Tilde{D})$ is the set of
$(g,h,\lambda,\mu)\in C(V_1)\times C(V_2)\times
\mathbb{R}\times\mathbb{R}$ satisfying
\begin{align}
&2(A_1+\lambda+2)g
-(2\langle\bm{1},g\rangle-\mu)\bm{1}=0,
\label{2eqn:Join1}\\
&2(A_2+\lambda+2)h
-(2\langle\bm{1},h\rangle-\mu)\bm{1}=0,
\label{2eqn:Join2}\\
&\langle g,g\rangle+\langle h,h\rangle=1,
\label{2eqn:Join3}\\
&\langle \bm{1},g\rangle+\langle \bm{1},h\rangle=0.
\label{2eqn:Join4}
\end{align}
Then as an immediate consequence of Proposition \ref{02prop:QEC}
we obtain

\begin{proposition}\label{02prop:general formula QEC(G_1+G_2)}
Let $G_1=(V_1,E_1)$ and $G_2=(V_2,E_2)$ be finite
(not necessarily connected) graphs with $V_1\cap V_2=\emptyset$.
Let $\mathcal{S}(\Tilde{D})$ be the same as above
and $\Lambda(\Tilde{D})$ the projection
of $\mathcal{S}(\Tilde{D})$ onto the $\lambda$-axis,
namely, the set of $\lambda\in\mathbb{R}$ such that
$(g,h,\lambda,\mu)\in \mathcal{S}(\Tilde{D})$
for some $g\in C(V_1)$, $h\in C(V_2)$ and $\mu\in\mathbb{R}$.
Then we have
\begin{equation}\label{02eqn:QEC(join of graphs)}
\mathrm{QEC}(G_1+G_2)
=\max\Lambda(\Tilde{D}),
\end{equation}
\end{proposition}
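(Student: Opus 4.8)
The plan is to read the statement as the specialization of Proposition \ref{02prop:QEC} to the connected graph $\tilde{G}=G_1+G_2$, so that the whole assertion collapses to identifying the stationary set $\mathcal{S}(\tilde{D})$ explicitly in the block coordinates. First I would record that $G_1+G_2$ is always connected with $1\le\mathrm{diam}(G_1+G_2)\le2$, so its distance matrix has the closed form $\tilde{D}=2J-2I-\tilde{A}$ displayed in \eqref{2eqn:distance matrix of join}; provided $|\tilde{V}|=|V_1|+|V_2|\ge3$, Proposition \ref{02prop:QEC} applies verbatim and gives $\mathrm{QEC}(G_1+G_2)=\max\Lambda(\tilde{D})$. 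Thus the only genuine task is to verify that the abstract system \eqref{01eqn:LM1-1}--\eqref{01eqn:LM3} defining $\mathcal{S}(\tilde{D})$ is equivalent to the concrete system \eqref{2eqn:Join1}--\eqref{2eqn:Join4}.

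For that step I would expand the stationary equation \eqref{01eqn:LM1-1}, namely $(\tilde{D}-\lambda)f=\tfrac{\mu}{2}\bm{1}$, block-wise. Writing $f=\begin{bmatrix}g\\h\end{bmatrix}$ as in the preceding display and substituting the block form of $\tilde{D}$, the key simplification is the identity $Jg=\langle\bm{1},g\rangle\bm{1}$ (and likewise for $h$), which turns each occurrence of the all-ones block $J$ into a scalar multiple of $\bm{1}$. The top block then reads $-A_1g-(\lambda+2)g+(2\langle\bm{1},g\rangle+\langle\bm{1},h\rangle)\bm{1}=\tfrac{\mu}{2}\bm{1}$, and symmetrically for the bottom block. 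Invoking the constraint \eqref{01eqn:LM3}, i.e. \eqref{2eqn:Join4}, collapses the coefficient $2\langle\bm{1},g\rangle+\langle\bm{1},h\rangle$ to $\langle\bm{1},g\rangle$; after multiplying through by $-2$ this is exactly \eqref{2eqn:Join1}, and the bottom block yields \eqref{2eqn:Join2}. The remaining equations \eqref{01eqn:LM2} and \eqref{01eqn:LM3} are simply \eqref{2eqn:Join3} and \eqref{2eqn:Join4} rewritten in the $(g,h)$ coordinates, so the two systems coincide and the conclusion follows.

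I expect no serious obstacle: every manipulation is linear, and this is the ``simple algebra'' already signalled before the statement. The one point deserving care is that the passage between the two systems must be a genuine \emph{equivalence} rather than a one-way implication: since \eqref{2eqn:Join4} was used to simplify the coefficient of $\bm{1}$, I should check that the manipulation is reversible, i.e. that any $(g,h,\lambda,\mu)$ solving \eqref{2eqn:Join1}--\eqref{2eqn:Join4} also solves \eqref{01eqn:LM1-1}; this is immediate because \eqref{2eqn:Join4} is itself part of the concrete system. A secondary bookkeeping matter is the hypothesis $|\tilde{V}|\ge3$ required to quote Proposition \ref{02prop:QEC}; the only excluded case is $|V_1|=|V_2|=1$, where $G_1+G_2=K_2$ and the value is already fixed by $\mathrm{QEC}(K_2)=-1$.
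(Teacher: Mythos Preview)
Your proposal is correct and follows exactly the route the paper takes: the proposition is stated there as ``an immediate consequence of Proposition~\ref{02prop:QEC},'' and the block-wise computation you spell out is precisely the ``simple algebra'' the paper invokes just before the statement to rewrite \eqref{01eqn:LM1-1}--\eqref{01eqn:LM3} as \eqref{2eqn:Join1}--\eqref{2eqn:Join4}. Your attention to the reversibility of the simplification via \eqref{2eqn:Join4} and to the side case $|V_1|=|V_2|=1$ goes slightly beyond what the paper makes explicit, but is in the same spirit.
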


\section{Join of Regular Graphs}

The main purpose of this section is to derive a formula
of the QE constant of the join of two regular graphs.
The result is stated in the following

\begin{theorem}\label{03thm:QEC(G_1+G_2)}
For $i=1,2$ let $G_i=(V_i,E_i)$ be
a $r_i$-regular graph on $n_i=|V_i|$ vertices
with $V_1\cap V_2=\emptyset$,
where $r_i\ge0$ and $n_i\ge1$.
Then we have
\begin{equation}\label{03eqn:QEC(G_1+G_2)}
\mathrm{QEC}(G_1+G_2)=-2+
\max\left\{
-\lambda_{\min}(G_1),\,
-\lambda_{\min}(G_2),\,
\frac{2n_1n_2-r_1n_2-r_2n_1}{n_1+n_2}\right\},
\end{equation}
where $\lambda_{\min}(G_i)$ is the minimal eigenvalue of
the adjacency matrix of $G_i$.
\end{theorem}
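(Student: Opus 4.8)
The starting point is Proposition~\ref{02prop:general formula QEC(G_1+G_2)}, which reduces the computation to $\mathrm{QEC}(G_1+G_2)=\max\Lambda(\Tilde D)$, so I must determine exactly which $\lambda$ admit a nonzero $(g,h)$ satisfying the stationary equations \eqref{2eqn:Join1}--\eqref{2eqn:Join4}. The essential use of regularity is that $A_i\bm{1}=r_i\bm{1}$ and $A_i$ is symmetric, so $A_i$ leaves invariant both $\mathbb{R}\bm{1}$ and its orthogonal complement in $C(V_i)$. Accordingly I would write $g=a\bm{1}+g_\perp$ and $h=b\bm{1}+h_\perp$ with $g_\perp,h_\perp\perp\bm{1}$, and split \eqref{2eqn:Join1} and \eqref{2eqn:Join2} into their $\bm{1}$-components and their components in $\bm{1}^{\perp}$.

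This splitting decouples the system into two independent channels. The orthogonal components give $(A_1+\lambda+2)g_\perp=0$ and $(A_2+\lambda+2)h_\perp=0$, so $g_\perp$ (resp.\ $h_\perp$) can be nonzero only when $-(\lambda+2)$ is an eigenvalue of $A_1$ (resp.\ $A_2$) possessing an eigenvector orthogonal to $\bm{1}$; among such $\lambda$ the largest are $-2-\lambda_{\min}(G_1)$ and $-2-\lambda_{\min}(G_2)$. The $\bm{1}$-components, together with the linear constraint \eqref{2eqn:Join4} which reads $a n_1+b n_2=0$, form a homogeneous system in the three scalars $(a,b,\mu)$, namely $2a(r_1+\lambda+2-n_1)+\mu=0$, $2b(r_2+\lambda+2-n_2)+\mu=0$ and $a n_1+b n_2=0$. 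Since any nontrivial solution must have $(a,b)\neq0$, such a solution exists precisely when the $3\times3$ coefficient determinant vanishes; a short computation turns this into $(\lambda+2)(n_1+n_2)=2n_1n_2-r_1n_2-r_2n_1$, i.e.\ $\lambda=-2+\frac{2n_1n_2-r_1n_2-r_2n_1}{n_1+n_2}$.

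With the three candidate families of $\lambda$ in hand, I would prove the two inequalities. For $\max\Lambda(\Tilde D)$ bounded above by the claimed value: any stationary point has $(g,h)\neq0$, so either an orthogonal part is nonzero, forcing $\lambda$ into the first two families, or both orthogonal parts vanish and hence $(a,b)\neq0$, forcing the determinant condition; in every case $\lambda$ is bounded by the maximum in \eqref{03eqn:QEC(G_1+G_2)}. For the reverse inequality I would realize each candidate by an explicit normalized stationary point: for the first two, take $g$ a $\lambda_{\min}(G_1)$-eigenvector orthogonal to $\bm{1}$ (resp.\ the analogue for $G_2$) with the remaining data zero; for the third, take $g=a\bm{1}$, $h=b\bm{1}$ with $(a,b,\mu)$ a nontrivial solution of the above system. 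In each case \eqref{2eqn:Join4} holds by construction and \eqref{2eqn:Join3} is met after rescaling, so the value lies in $\Lambda(\Tilde D)$.

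The main obstacle is not the generic computation but the degenerate cases that decide whether each candidate is actually attained. The realization of the first family requires a $\lambda_{\min}(G_1)$-eigenvector orthogonal to $\bm{1}$, which exists whenever $n_1\ge2$ but fails when $n_1=1$ (and symmetrically for $G_2$); I must check that in exactly these situations the join term $\frac{2n_1n_2-r_1n_2-r_2n_1}{n_1+n_2}$ strictly exceeds the corresponding $-\lambda_{\min}(G_i)=0$, so that the unattainable term is never the maximum and \eqref{03eqn:QEC(G_1+G_2)} is unaffected. I also need to account for coincidences among the three candidate values, where channels are simultaneously active but contribute the same $\lambda$, and for the trivial boundary case $n_1=n_2=1$, where $G_1+G_2=K_2$ lies outside the Lagrange framework and is handled directly by $\mathrm{QEC}(K_2)=-1$.
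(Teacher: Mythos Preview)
Your proposal is correct and follows essentially the same route as the paper: both exploit $A_i\bm{1}=r_i\bm{1}$ to decouple the stationary system \eqref{2eqn:Join1}--\eqref{2eqn:Join4} into a scalar channel yielding $\lambda^*=-2+\frac{2n_1n_2-r_1n_2-r_2n_1}{n_1+n_2}$ and an orthogonal channel yielding the eigenvalue candidates $-2-\lambda_{\min}(G_i)$, then realize each candidate explicitly. Your upfront decomposition $g=a\bm{1}+g_\perp$, $h=b\bm{1}+h_\perp$ organizes the same computation slightly more systematically than the paper's case split on $\langle\bm{1},g\rangle$, and you are more careful than the paper about the boundary cases $n_i=1$ and $n_1=n_2=1$.
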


\begin{proof}
Note that $r_i=0$ is allowed.
In that case $G_i$ is an empty graph on $n_i$ vertices.
For \eqref{03eqn:QEC(G_1+G_2)}
we will describe $\mathcal{S}(\Tilde{D})$ explicitly
and apply Proposition \ref{02prop:general formula QEC(G_1+G_2)}.

Since $G_1$ is $r_1$-regular, we have $A_1\bm{1}=r_1\bm{1}$.
Then, taking the inner product of \eqref{2eqn:Join1} with $\bm{1}$,
we obtain
\begin{equation}\label{03eqn:1}
(\lambda+2-n_1+r_1)\langle\bm{1},g\rangle=-\frac{\mu}{2}\,n_1\,.
\end{equation}
Similarly, from \eqref{2eqn:Join2} we obtain
\[
(\lambda+2-n_2+r_2)\langle\bm{1},h\rangle=-\frac{\mu}{2}\,n_2\,,
\]
and using \eqref{2eqn:Join4} we have
\begin{equation}\label{03eqn:3}
(\lambda+2-n_2+r_2)\langle\bm{1},g\rangle=\frac{\mu}{2}\,n_2\,.
\end{equation}
It then follows from \eqref{03eqn:1} and \eqref{03eqn:3} that
\[
\{(n_1+n_2)(\lambda+2)-2n_1n_2+r_1n_2+r_2n_1\}\langle\bm{1},g\rangle=0.
\]
Accordingly, we consider two cases.

(Case 1)\, $(n_1+n_2)(\lambda+2)+n_1r_2+n_2r_1-2n_1n_2=0$.
This happens when $\lambda=\lambda^*$, where
\begin{equation}\label{02eqn:candidate}
\lambda^*=\frac{2n_1n_2-r_1n_2-r_2n_1}{n_1+n_2}-2.
\end{equation}
We set
\[
g^*=\sqrt{\frac{n_2}{n_1(n_1+n_2)}}\, \bm{1},
\qquad
h^*=-\sqrt{\frac{n_1}{n_2(n_1+n_2)}}\, \bm{1},
\]
and
\[
\mu^*=\frac{2(n_1-r_1-n_2+r_2)}{n_1+n_2}
    \sqrt{\frac{n_1n_2}{n_1+n_2}}\,.
\]
Then, simple calculation shows that
\eqref{2eqn:Join1}--\eqref{2eqn:Join4} are satisfied by
$(g^*,h^*,\lambda^*,\mu^*)$.
Hence $\lambda^*\in\Lambda(\Tilde{D})$.

(Case 2)\, $\lambda\neq \lambda^*$ and $\langle\bm{1},g\rangle=0$.
By \eqref{2eqn:Join4} and \eqref{03eqn:1} we obtain
$\langle\bm{1},h\rangle=\mu=0$.
Moreover, \eqref{2eqn:Join1} and \eqref{2eqn:Join2} become
\begin{align}
&(A_1+\lambda+2)g=0,
\label{2eqn:Join1-1}\\
&(A_2+\lambda+2)h=0,
\label{2eqn:Join2-1}
\end{align}
respectively.
Thus, our task is to find $(g,h,\lambda,0)$ satisfying
\eqref{2eqn:Join1-1}, \eqref{2eqn:Join2-1},
\eqref{2eqn:Join3} and
\begin{equation}\label{03eqn:Join4-2}
\langle\bm{1},g\rangle=\langle\bm{1},h\rangle=0.
\end{equation}
It follows from \eqref{2eqn:Join1-1} and
\eqref{2eqn:Join2-1} that
$-\lambda-2 \in\mathrm{ev}(A_1)\cup\mathrm{ev}(A_2)$.
In fact, if otherwise we have $g=0$ and $h=0$ which do not
fulfill \eqref{2eqn:Join3}.
Hence
\begin{equation}\label{3eqn:subset1}
\Lambda(\Tilde{D})\backslash\{\lambda^*\}
\subset
\{-\alpha-2\,;\, \alpha\in\mathrm{ev}(A_1)\cup \mathrm{ev}(A_2)\}.
\end{equation}

Now recall that $r_1\in \mathrm{ev}(A_1)$ is simple
and $A_1\bm{1}=r_1\bm{1}$.
Let $\alpha\in\mathrm{ev}(A_1)\backslash\{r_1\}$
and take an eigenvector $g$ such that $\langle g,g\rangle=1$.
Since the eigenspaces with distinct eigenvalues
are mutually orthogonal, we have $\langle \bm{1},g\rangle=0$.
Then $(g,h=0,-\alpha-2,0)$ satisfies
\eqref{2eqn:Join1-1}, \eqref{2eqn:Join2-1},
\eqref{2eqn:Join3} and \eqref{03eqn:Join4-2}.
Hence $-\alpha-2\in\Lambda(\Tilde{D})$.
Similarly,
if $\alpha\in\mathrm{ev}(A_2)\backslash\{r_2\}$, we have
$-\alpha-2\in\Lambda(\Tilde{D})$.
Thus,
\begin{equation}\label{3eqn:subset2}
\{-\alpha-2\,;\, \alpha\in\mathrm{ev}(A_1)\backslash\{r_1\}\}
\cup \{-\alpha-2\,;\, \alpha\in \mathrm{ev}(A_2)\backslash\{r_2\}\}
\subset \Lambda(\Tilde{D})
\end{equation}

Since $r_1$ is the largest eigenvalue of $A_1$,
\[
\max\{-\alpha-2\,;\, \alpha\in\mathrm{ev}(A_1)\}
=\max\{-\alpha-2\,;\, \alpha\in\mathrm{ev}(A_1)\backslash\{r_1\}\}
=-\lambda_{\min}(G_1)-2.
\]
A similar relation holds for $r_2$ and $A_2$.
Consequently, we see from \eqref{3eqn:subset1} and
\eqref{3eqn:subset2} that
\[
\max\Lambda(\Tilde{D})
=\max\{-\lambda_{\min}(G_1)-2, \,
-\lambda_{\min}(G_2)-2, \,\lambda^*\},
\]
which proves \eqref{03eqn:QEC(G_1+G_2)}.
\end{proof}

\begin{corollary}\label{03cor:bound QEC(G_1+G_2)}
Notations and assumptions being the same as in
Theorem \ref{03thm:QEC(G_1+G_2)}, we have
\[
\mathrm{QEC}(G_1+G_2)\ge
\max\{-2-\lambda_{\min}(G_1),\,\,
-2-\lambda_{\min}(G_2)\}.
\]
\end{corollary}

\begin{example}[complete bipartite graphs \cite{Obata-Zakiyyah2018}]
\label{03ex:complete bipartite graphs}
\normalfont
Let $m\ge1$ and $n\ge1$.
The \textit{complete bipartite graph} $K_{m,n}$ is the graph join
$K_{m,n}=\bar{K}_m+\bar{K}_n$, where
$\bar{K}_m$ and $\bar{K}_m$ the empty graphs
on $m$ and $n$ vertices, respectively.
Obviously,
\begin{equation}\label{03eqn:lambda_min(bar K_m)}
\lambda_{\min}(\bar{K}_m)=0,
\qquad m\ge1.
\end{equation}
It then follows from Theorem \ref{03thm:QEC(G_1+G_2)} that
\[
\mathrm{QEC}(G_1+G_2)=-2+
\max\left\{0,0,\frac{2mn}{m+n}\right\}
=-2+\frac{2mn}{m+n}.
\]
Hence
\[
\mathrm{QEC}(K_{m,n})
=\frac{2(mn-m-n)}{m+n}\,,
\qquad m\ge1,\quad n\ge1.
\]
\end{example}

\begin{example}[complete split graph]\label{exa-4}
\normalfont
Let $m\ge1$ and $n\ge1$.
The graph join $K_n + \bar{K}_m$
is called the \textit{complete split graph}.
Note that
\begin{equation}\label{03eqn:lambda_min(K_n)}
\lambda_{\min}(K_n)
=\begin{cases}
  0, & \text{for $n=1$,} \\
  -1, & \text{for $n\ge2$.}
  \end{cases}
\end{equation}
For $n=1$ we have
$K_n+\bar{K}_m=K_1+\bar{K}_m=\bar{K}_1+\bar{K}_m=K_{1,m}$,
which is a special case of
Example \ref{03ex:complete bipartite graphs}.
Assume that $n\ge2$ and $m\ge1$.
Then by Theorem \ref{03thm:QEC(G_1+G_2)} we obtain
\begin{align}
\mathrm{QEC}(K_n +\bar{K}_m)
&=-2+\max\left\{1,0,\frac{2mn-(n-1)m}{m+n} \right\}
\nonumber\\
&=-2+\frac{mn+m}{m+n}=\frac{mn-m-2n}{m+n}.
\label{03eqn:inEX3.4}
\end{align}
In particular,
the tri-partite graph $K_{1,1,m}$ being the join of
$K_2$ and $\bar{K}_m$,
we set $n=2$ in \eqref{03eqn:inEX3.4} to get
\[
\mathrm{QEC}(K_{1,1,m})
=\mathrm{QEC}(K_2+\bar{K}_m)=\frac{m-4}{m+2},
\]
see also \cite{Obata-Zakiyyah2018}.
\end{example}

\begin{example}[friendship graphs]\label{exa-2}
\normalfont
Let $n\ge1$. The \textit{friendship graph} $F_n$ on $2n+1$ vertices
is the graph join $F_n=nK_2+ K_1$,
where $nK_2$ is the disjoint union of $n$ copies of $K_2$.
Note that $nK_2$ is a $1$-regular graph on $2n$ vertices.
Since $\lambda_{\min}(nK_2)=\lambda_{\min}(K_2)=-1$
and $\lambda_{\min}(K_1)=0$,
by Theorem \ref{03thm:QEC(G_1+G_2)} we obtain
\[
\mathrm{QEC}(F_n)
=-2+\max\left\{1,\, 0,\, \frac{4n-1}{2n+1}\right\}
=-2+\frac{4n-1}{2n+1}=\frac{-3}{2n+1}.
\]
\end{example}

\begin{example}\label{exa-1}
\normalfont
Let $n\ge3$ and $m\ge1$.
We consider the graph joins $C_n+K_m$ and $C_n+\bar{K}_m$.
Recall first that
\[
\lambda_{\min}(C_n)
=\min\left\{
 2\cos\frac{2\pi j}{n}\,;\, 1\le j\le n-1\right\}
=\begin{cases}
 -2+4\sin^2\dfrac{\pi}{2n}\,, & \text{if $n$ is odd,}\\
 -2, & \text{if $n$ is even,}
\end{cases}
\]
of which verification is elementary,
see e.g., \cite{Brouwer-Haemers2012}.
Since $\lambda_{\min}(C_n)\le\lambda_{\min}(K_n)$ for
all $n\ge3$ and $m\ge1$, we obtain
\begin{align}
\mathrm{QEC}(C_n +K_m)
&=-2+\max\left\{-\lambda_{\min}(C_n), \,\,\frac{mn-2m+n}{m+n} \right\}
\nonumber\\
&=\max\left\{-2-\lambda_{\min}(C_n),\,\, \frac{mn-4m-n}{m+n} \right\}.
\label{3-eq-5}
\end{align}
Case of $n$ being even in \eqref{3-eq-5} is simple.
In fact, after simple algebra
we obtain
\[
\mathrm{QEC}(C_{2n} +K_m)
=\begin{cases}
0, & \text{if $mn-2m-n\le0$}, \\[3pt]
\dfrac{2mn-4m-2n}{m+2n}\,, & \text{otherwise},
\end{cases}
\]
where $m\ge1$ and $n\ge2$.
In case of $n$ being odd in \eqref{3-eq-5}, we only mention
the following formula:
\[
\mathrm{QEC}(C_{2n-1}+K_m)
=\max\left\{-4\sin^2\frac{\pi}{2(2n-1)}\,,\,\,
 \frac{2mn-5m-2n+1}{m+2n-1} \right\},
\]
where $n\ge2$ and $m\ge1$.
For $n\ge3$ the
\textit{wheel graph} $W_n$ on $n+1$ vertices is the graph join
$W_n=C_n+K_1$.
Then after simple observation we obtain
\[
\mathrm{QEC}(W_n)
=\mathrm{QEC}(C_n+K_1)
=\begin{cases}
-4\sin^2\dfrac{\pi}{2n} & \text{if $n$ is odd,}\\[3pt]
0& \text{if $n$ is even.}
\end{cases}
\]
The above result was obtained in
\cite{Obata2017} by different calculation.
In a similar and, in fact, simpler fashion we obtain
\[
\mathrm{QEC}(C_n+\bar{K}_m)
=\frac{2mn-4m-2n}{m+n}\,,
\qquad n\ge3, \quad m\ge2.
\]
In particular,
$\mathrm{QEC}(C_n+\bar{K}_m)>0$ except
$(m,n)=(2,3),(3,3),(2,4)$, and
\[
\mathrm{QEC}(C_3+\bar{K}_2)=-\frac25\,,
\quad
\mathrm{QEC}(C_3+\bar{K}_3)
=\mathrm{QEC}(C_4+\bar{K}_2)
=0.
\]
In fact, ${QEC}(C_3+\bar{K}_2)=-\frac25$ was first presented as No. 20 in the list of $n=5$ of \cite{Obata-Zakiyyah2018}.
\end{example}

\begin{remark}\normalfont
Let $G$ be a graph and $A$ be the adjacency matrix.
Following Cvetkovi\'{c} \cite{Cvetkovic1978}
an eigenvalue of $A$ is called a \textit{main eigenvalue}
if it has an eigenvector which is not orthogonal to $\bm{1}$.
Then a graph $G$ has exactly one main eigenvalue
if and only if $G$ is regular.
In that case this main eigenvalue is equal to the regular degree.
It seems interesting to study generalization of
Theorem \ref{03thm:QEC(G_1+G_2)} along the idea of
main and non-main eigenvalues.
\end{remark}

\section{Two Subgraphs of Graph Join}

\subsection{Double Graphs $\mathrm{Double}(G)$}

Let $G=(V,E)$ be a finite graph (not necessarily connected).
We set
\begin{align*}
\Tilde{V}&=V\times\{0,1\}=\{(x,i)\,;\, x\in V, i\in\{0,1\}\},\\
\Tilde{E}&=\{
\{(x,i),(y,j)\}\,;\,(x,i)\in \Tilde{V},\,\,(y,j)\in \Tilde{V},\,\,
\{x,y\}\in E\}.
\end{align*}
Then $(\Tilde{V},\Tilde{E})$ becomes a graph,
which is called the \textit{double graph} of $G=(V,E)$
and is denoted by $\mathrm{Double\,}(G)$.
Clearly, $\mathrm{Double\,}(G)$ is a subgraph of $G+G$.
For a review on the double graphs we refer to
\cite{Munarini-etal2008}.

The double graph is understood in a slightly informal manner
as follows.
Divide $\Tilde{V}$ into two parts:
\[
\Tilde{V}=V_0\cup V_1\,,
\qquad
V_i=\{(x,i)\,;\, x\in V\},
\quad i=0,1.
\]
Then the induced subgraph spanned by $V_i$
is isomorphic to $G$.
Namely, the double graph $\mathrm{Double\,}(G)$ is
a graph obtained from the union of two copies of $G$
by adding edges between them.
The new edges appear between $(x,0)$ and $(y,1)$ if and only if
$x\in V$ and $y\in V$ are adjacent.

\begin{example}\normalfont
As is seen in Figure \ref{Fig:double graphs} we have
\[
\mathrm{Double\,}(K_2)=C_4\,,
\qquad
\mathrm{Double\,}(K_3)=K_6\backslash 3K_2\,,
\qquad
\mathrm{Double\,}(P_3)=K_{2,4}\,.
\]
In fact, for the complete graph $K_n$ we have
\[
\mathrm{Double\,}(K_n)=K_{2n}\backslash nK_2\,,
\]
where $nK_2$ stands for the disjoint union of $n$ edges.
\end{example}

\begin{figure}[hbt]
\begin{center}
\includegraphics[width=60pt]{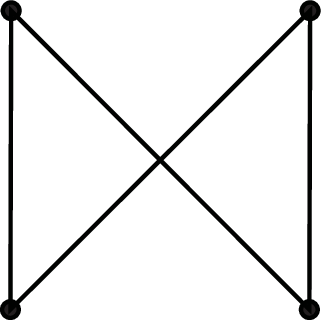}
\qquad\quad
\includegraphics[width=90pt]{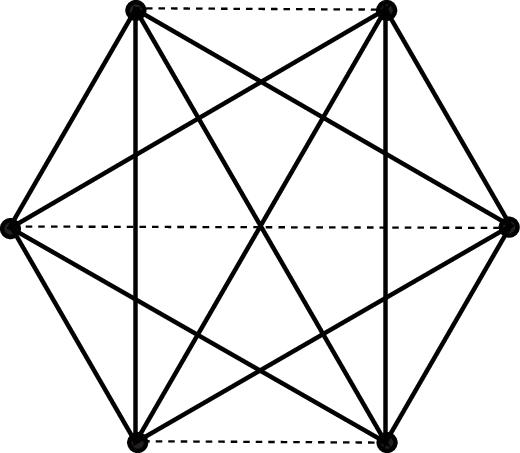}
\qquad\quad
\includegraphics[width=60pt]{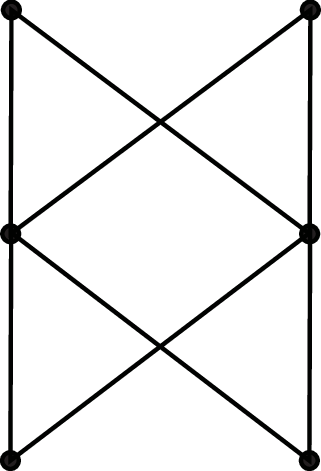}
\end{center}
\caption{$\mathrm{Double\,}(K_2)$,\,
$\mathrm{Double\,}(K_3)$ and $\mathrm{Double\,}(P_3)$}
\label{Fig:double graphs}
\end{figure}

We are interested in the QE constants of double graphs.
As is easily verified, for a graph $G=(V,E)$ with
$n=|V|\ge2$ the double graph $\mathrm{Double\,}(G)$ is
connected if and only if $G$ is connected.

\begin{theorem}\label{4thm:QEC(Double)}
Let $G=(V,E)$ be a finite connected graph with $n=|V|\ge2$.
Then we have
\begin{equation}\label{4eqn:QEC(Double)}
\mathrm{QEC}(\mathrm{Double\,}(G))
=2\mathrm{QEC}(G)+2.
\end{equation}
\end{theorem}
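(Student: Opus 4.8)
The plan is to write down the distance matrix of $\mathrm{Double}(G)$ in block form, use the substitution $u=g+h$ to collapse the quadratic form in \eqref{1eqn:def of QEC} onto a single vector $u$, and then reduce to a one-parameter optimization whose answer is forced by the sign of $\mathrm{QEC}(G)+2$.

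First I would compute the distances in $\tilde G=\mathrm{Double}(G)$. Ordering $\tilde V=V_0\cup V_1$ with $V_0$ first and writing $D$ for the distance matrix of $G$, I claim that for $x\neq y$ one has $d_{\tilde G}((x,i),(y,j))=d_G(x,y)$, since a shortest $x$–$y$ path in $G$ lifts to a path of the same length in $\tilde G$ with second coordinates chosen arbitrarily, while conversely any $\tilde G$-path projects to a $G$-walk of equal length. For the twins one has $d_{\tilde G}((x,0),(x,1))=2$: they are never adjacent (no loops), but because $G$ is connected with $n\ge2$ every vertex has a neighbour, providing a common neighbour in $\tilde G$. Hence
\[
\tilde D=\begin{bmatrix} D & D+2I \\ D+2I & D \end{bmatrix}
=J_2\otimes(D+2I)-2I_{2n},
\]
where $J_2$ is the $2\times2$ all-ones matrix.

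Next I would split $f$ into its parts $g,h\in C(V)$ on $V_0,V_1$ and set $u=g+h$. A direct expansion of the block form, using symmetry of $D$, yields
\[
\langle f,\tilde D f\rangle
=\langle u,Du\rangle+2\langle u,u\rangle-2\langle f,f\rangle,
\]
so under the constraint $\langle f,f\rangle=1$ the objective depends on $f$ only through $u$, and the linear side condition $\langle\mathbf{1},f\rangle=0$ becomes simply $\langle\mathbf{1},u\rangle=0$. The crucial step is to characterize which $u$ are attainable: writing $g=\tfrac12 u+w$ and $h=\tfrac12 u-w$, the norm constraint reads $\langle f,f\rangle=\tfrac12\langle u,u\rangle+2\langle w,w\rangle=1$, which is solvable for some $w\in C(V)$ exactly when $\langle u,u\rangle\le2$ (and $w$ is otherwise unconstrained). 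Thus \eqref{1eqn:def of QEC} reduces to
\[
\mathrm{QEC}(\mathrm{Double}(G))
=\max\bigl\{\langle u,(D+2I)u\rangle-2 \;:\;
\langle\mathbf{1},u\rangle=0,\ \langle u,u\rangle\le2\bigr\}.
\]

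Finally I would solve this scalar problem. Fixing $\langle u,u\rangle=t$ and scaling $u=\sqrt{t}\,v$ with $\|v\|=1$, $v\perp\mathbf{1}$, the inner maximum equals $t\bigl(\mathrm{QEC}(G)+2\bigr)-2$, since $\langle v,(D+2I)v\rangle=\langle v,Dv\rangle+2$ and the definition of $\mathrm{QEC}(G)$ applies to $v$. Because every finite connected graph with at least two vertices satisfies $\mathrm{QEC}(G)\ge-1$, the slope $\mathrm{QEC}(G)+2\ge1$ is positive, so the maximum over $t\in[0,2]$ is attained at $t=2$, giving $2(\mathrm{QEC}(G)+2)-2=2\,\mathrm{QEC}(G)+2$, which is \eqref{4eqn:QEC(Double)}. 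I expect the only delicate point to be the characterization of the attainable $u$ via the decomposition $g=\tfrac12u+w$, $h=\tfrac12u-w$; once that is in place, the remaining optimization is a one-parameter maximum whose optimizer is pinned down entirely by the positivity of $\mathrm{QEC}(G)+2$.
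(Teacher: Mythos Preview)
Your argument is correct. The distance computation, the identity
\[
\langle f,\tilde D f\rangle=\langle u,Du\rangle+2\langle u,u\rangle-2\langle f,f\rangle,
\qquad u=g+h,
\]
and the characterization of attainable $u$ via $g=\tfrac12u+w$, $h=\tfrac12u-w$ all check out, and the final one-parameter maximization is settled by $\mathrm{QEC}(G)+2\ge1>0$ exactly as you say.

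Your route differs from the paper's. The paper applies its Lagrange-multiplier machinery (Proposition~\ref{02prop:QEC}) to $\tilde D$: from the stationary-point equations it derives $(\lambda+2)(h-g)=0$, discards $\lambda=-2$ using $\mathrm{QEC}\ge-1$, and then observes that on the diagonal $g=h$ the stationary-point system for $\tilde D$ is an affine reparametrization $\lambda=2\lambda_1+2$ of the system for $D$. You instead work directly with the variational definition~\eqref{1eqn:def of QEC}: the change of variables $(g,h)\leftrightarrow(u,w)$ block-diagonalizes the quadratic form, the $w$-part disappears from the objective entirely, and what remains is a relaxation $\langle u,u\rangle\le2$ that you solve by scaling. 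Both arguments exploit the same symmetry (interchange of the two copies of $G$) and both ultimately invoke $\mathrm{QEC}(G)\ge-1$ at the decisive step; your version is more elementary in that it bypasses the stationary-point formalism and Proposition~\ref{02prop:QEC}, while the paper's version fits the uniform Lagrange framework used elsewhere (e.g.\ in Theorem~\ref{03thm:QEC(G_1+G_2)}).
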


\begin{proof}
Let $D$ be the distance matrix of $G$.
Then, with the natural arrangement of rows and columns
the distance matrix $\Tilde{D}$ of the double graph
$\mathrm{Double\,}(G)$ is written in the form:
\begin{equation}\label{05eqn:Tilde D}
\Tilde{D}=
\begin{bmatrix}
D & D+2I \\
D+2I & D
\end{bmatrix}.
\end{equation}
According to the general argument in Subsection
\ref{subsec:Formulae for QE Constants}
we consider
\[
F(g,h,\lambda,\mu)
=\left\langle \begin{bmatrix} g \\ h \end{bmatrix},
 \Tilde{D} \begin{bmatrix} g \\ h \end{bmatrix}\right\rangle
-\lambda(\langle g,g\rangle+\langle h,h\rangle-1)
-\mu(\langle\bm{1},g\rangle+\langle\bm{1},h\rangle),
\]
where $g\in C(V_0)$, $h\in C(V_1)$, $\lambda\in\mathbb{R}$
and $\mu\in \mathbb{R}$.
Let $\mathcal{S}(\Tilde{D})$ be the set of all stationary
points of $F(g,h,\lambda,\mu)$.
By Proposition \ref{02prop:QEC} we have
\begin{equation}\label{4eqn:QEC formula}
\mathrm{QEC}(\mathrm{Double\,}(G))
=\max\Lambda(\Tilde{D}),
\end{equation}
where $\Lambda(\Tilde{D})$ is the projection
of $\mathcal{S}(\Tilde{D})$ onto the $\lambda$-axis,
namely, the set of $\lambda\in\mathbb{R}$ such that
$(g,h,\lambda,\mu)\in \mathcal{S}(\Tilde{D})$
for some $g\in C(V_0)$, $h\in C(V_1)$ and $\mu\in\mathbb{R}$.

After simple calculation we see that
$\mathcal{S}(\Tilde{D})$ is the set of solutions to
the following system of equations:
\begin{align}
&2Dh+2Dg+4g-2\lambda h-\mu\bm{1}=0,
\label{04eqn:D1}\\
&2Dh+2Dg+4h-2\lambda g-\mu\bm{1}=0,
\label{04eqn:D2}\\
&\langle h, h\rangle+\langle g, g\rangle=1,
\label{04eqn:D3}\\
&\langle \bm{1}, h\rangle+\langle \bm{1}, g\rangle=0.
\label{04eqn:D4}
\end{align}
Taking the difference of \eqref{04eqn:D1} and
\eqref{04eqn:D2}, we obtain
\[
(\lambda+2)(h-g)=0.
\]
Here we do not need to check whether $\lambda=-2$ belongs to
$\Lambda(\Tilde{D})$ because of \eqref{4eqn:QEC formula}
and the fact that the QE constant is always $\ge-1$.

Thus we consider the case of $f=g$.
Equations \eqref{04eqn:D1}--\eqref{04eqn:D4} are reduced to
the following equations:
\begin{equation}\label{4eqn:D5}
(4D+4-2\lambda)h=\mu\bm{1},
\qquad
\langle h, h\rangle=\frac12\,,
\qquad
\langle \bm{1}, h\rangle=0.
\end{equation}
On the other hand, in computing $\mathrm{QEC}(G)$
we consider the set $\mathcal{S}(D)$ of
all $(f_1,\lambda_1,\mu_1)\in
C(V)\times \mathbb{R}\times\mathbb{R}$ satisfying
\begin{equation}\label{4eqn:D6}
(D-\lambda_1)f_1=\frac{\mu_1}{2}\,\bm{1}\,,
\qquad
\langle f_1, f_1\rangle=1\,,
\qquad
\langle \bm{1}, f_1\rangle=0.
\end{equation}
We see from \eqref{4eqn:D5} and \eqref{4eqn:D6}
that the correspondence
\[
h=\frac{1}{\sqrt2}\,f_1\,,
\qquad
\lambda=2\lambda_1+2,
\qquad
\mu=\sqrt{2}\,\mu_1,
\]
gives rise to a one-to-one correspondence between
$\mathcal{S}(D)$ and
$\{(f,f,\lambda,\mu)\in\mathcal{S}(\Tilde{D})\}$.
Consequently,
\begin{align*}
\mathrm{QEC}(\mathrm{Double\,}(G))
&=\max\{\lambda\,;\,
(g,h,\lambda,\mu)\in\mathcal{S}(\Tilde{D})\} \\
&=\max\{2\lambda_1+2\,;\,
(f_1,\lambda_1,\mu_1)\in\mathcal{S}(D)\} \\
&=2\mathrm{QEC}(G)+2,
\end{align*}
as desired.
\end{proof}

\begin{corollary}
Let $G=(V,E)$ be a finite connected graph with
$n=|V|\ge2$ vertices.
Then we have
\[
\mathrm{QEC}(\mathrm{Double\,}(G))\ge0.
\]
Moreover, $\mathrm{QEC}(\mathrm{Double\,}(G))=0$
if and only if $G=K_n$ is a complete graph.
\end{corollary}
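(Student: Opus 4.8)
The plan is to reduce the entire statement to the formula established in Theorem~\ref{4thm:QEC(Double)}, namely $\mathrm{QEC}(\mathrm{Double\,}(G))=2\,\mathrm{QEC}(G)+2$, combined with the two elementary facts about $\mathrm{QEC}(G)$ recorded in Subsection~2.1. No fresh computation with distance matrices is required: the whole argument is a translation of known bounds through the affine map $\lambda\mapsto 2\lambda+2$.

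First I would establish the inequality. Since $G$ is connected with $n=|V|\ge2$, it contains $K_2$ as an isometric subgraph, and because $\mathrm{QEC}(K_2)=-1$ the monotonicity of the QE constant under isometric embedding gives $\mathrm{QEC}(G)\ge-1$. Substituting this into the formula from Theorem~\ref{4thm:QEC(Double)} yields
\[
\mathrm{QEC}(\mathrm{Double\,}(G))=2\,\mathrm{QEC}(G)+2\ge 2\cdot(-1)+2=0,
\]
which is the first assertion.

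For the equality characterization, the same formula shows that $\mathrm{QEC}(\mathrm{Double\,}(G))=0$ holds exactly when $2\,\mathrm{QEC}(G)+2=0$, i.e.\ when $\mathrm{QEC}(G)=-1$. The remaining task is therefore to identify the graphs attaining the extreme value $-1$. This is precisely the content of the fact, also recorded in Subsection~2.1, that $\mathrm{QEC}(G)=-1$ if and only if $G$ is a complete graph; I would simply invoke it. Combining the two directions gives $\mathrm{QEC}(\mathrm{Double\,}(G))=0$ if and only if $G=K_n$, completing the proof.

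Because the statement is a direct consequence of Theorem~\ref{4thm:QEC(Double)}, there is no genuine obstacle here; the only point demanding care is to cite both the lower bound $\mathrm{QEC}(G)\ge-1$ and its equality case from the preliminary material, rather than re-deriving them, so that the corollary reads as an immediate specialization of the doubling formula.
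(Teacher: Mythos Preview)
Your proof is correct and follows essentially the same approach as the paper: apply the doubling formula $\mathrm{QEC}(\mathrm{Double\,}(G))=2\,\mathrm{QEC}(G)+2$ from Theorem~\ref{4thm:QEC(Double)}, use the bound $\mathrm{QEC}(G)\ge-1$ from Subsection~2.1 for the inequality, and then invoke the characterization that $\mathrm{QEC}(G)=-1$ holds exactly for complete graphs to settle the equality case.
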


\begin{proof}
Since $\mathrm{QEC}(G)\ge-1$ for any
finite connected graph with $n=|V|\ge2$ vertices,
we have
\[
\mathrm{QEC}(\mathrm{Double\,}(G))
=2\mathrm{QEC}(G)+2
\ge 2(-1)+2=0.
\]
The equality occurs if and only if $\mathrm{QEC}(G)=-1$,
that is, $G$ is a complete graph.
\end{proof}

\begin{example}\normalfont
Since $\mathrm{QEC}(K_n)=-1$ for $n\ge2$,
we have
\[
\mathrm{QEC}(\mathrm{Double\,}(K_n))=2\mathrm{QEC}(K_n)+2=0,
\qquad n\ge2.
\]
It is noted that $\mathrm{Double\,}(K_n)=K_{2n}\backslash nK_2$.
By elementary calculus we have
\begin{align*}
&\mathrm{QEC}(K_n\backslash K_2)=-\frac{2}{n}\,, \\
&\mathrm{QEC}(K_n\backslash rK_2)=0,
\qquad 2\le r\le\left[\frac{n}{2}\right],
\end{align*}
for details see \cite{Obata-Zakiyyah2018}.
\end{example}

\begin{remark}\normalfont
The distance spectrum of $\mathrm{Double\,}(G)$
is related to that of $G$ in a similar fashion
as in \eqref{4eqn:QEC(Double)},
for details see \cite{Indulal-Gutman2008}.
\end{remark}

\subsection{Lexicographic Product Graphs $G\triangleright K_2$}

The lexicographic product $G\triangleright K_2$ gives
another subgraph of the graph join $G+G$,
for the general definition of the lexicographic product,
see e.g., \cite{Hammack-etal}.

Here we introduce the lexicographic product $G\triangleright K_2$
in a slightly informal manner.
Let $G=(V,E)$ be a graph (not necessarily connected).
The lexicographic product $G\triangleright K_2$ is a graph
$(\Tilde{V},\Tilde{E})$ defined by
\begin{align*}
\Tilde{V}&=V\times\{0,1\}=\{(x,i)\,;\, x\in V, \, i\in\{0,1\}\},\\
\Tilde{E}&=\{
\{(x,i),(y,j)\}\,;\,(x,i)\in \Tilde{V},\,\,(y,j)\in \Tilde{V},\,\,
\text{$\{x,y\}\in E$ or $x=y$, $i\neq j$}  \}.
\end{align*}
In other words,
$G\triangleright K_2$ is obtained from $\mathrm{Double\,}(G)$
by adding new edges of the form $\{(x,0),(x,1)\}$, $x\in V$.
Obviously, $G\triangleright K_2$ is connected if and only if
so is $G$.

\begin{example}\normalfont
As is shown in Figure \ref{Fig:lexicographic graphs} we have
\[
K_2\triangleright K_2=K_4\,,
\qquad
K_3\triangleright K_2=K_6\,,
\qquad
P_3\triangleright K_2=K_6\backslash C_4\,.
\]
In fact, $K_n\triangleright K_2=K_{2n}$.
\begin{figure}[hbt]
\begin{center}
\includegraphics[width=60pt]{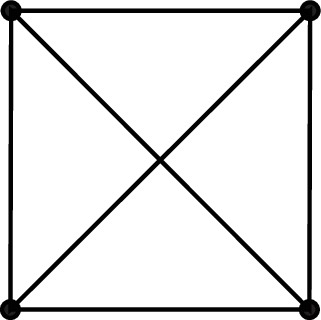}
\qquad\quad
\includegraphics[width=80pt]{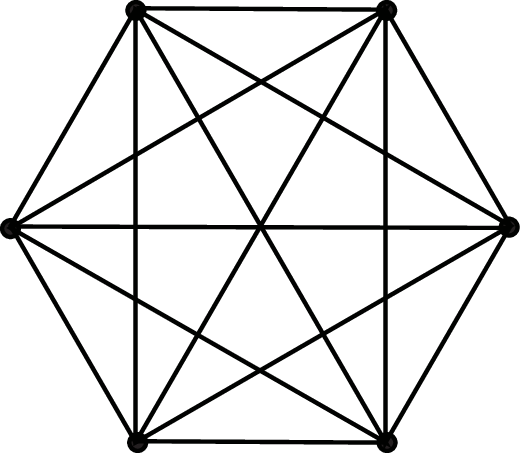}
\qquad\quad
\includegraphics[width=100pt]{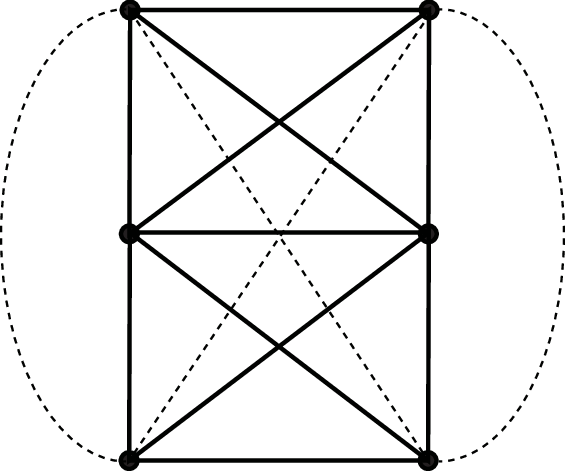}
\end{center}
\caption{$K_2\triangleright K_2$,\,\,
$K_3\triangleright K_2$ and $P_3\triangleright K_2$}
\label{Fig:lexicographic graphs}
\end{figure}
\end{example}

Let $G=(V,E)$ be a connected graph with $n=|V|\ge2$
and $D$ the distance matrix.
The distance matrix $\Tilde{D}$ of the lexicographic product
$G\triangleright K_2$ is written in the form:
\begin{equation}\label{05eqn:Tilde D of lexicographic product}
\Tilde{D}=
\begin{bmatrix}
D & D+I \\
D+I & D
\end{bmatrix},
\end{equation}
which is similar to the distance matrix of the double graph
$\mathrm{Double}(G)$.
Then after similar argument as in the case of
the double graphs we come to the following

\begin{theorem}\label{4thm:QEC(lexicographic)}
Let $G=(V,E)$ be a finite connected graph with
$n=|V|\ge2$ vertices.
Then we have
\begin{equation}\label{4eqn:QEC(lexicographic)}
\mathrm{QEC}(G\triangleright K_2)
=2\mathrm{QEC}(G)+1.
\end{equation}
\end{theorem}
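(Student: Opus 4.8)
The plan is to mimic the proof of Theorem \ref{4thm:QEC(Double)} almost verbatim, exploiting the fact that the distance matrix \eqref{05eqn:Tilde D of lexicographic product} differs from that of the double graph only in carrying $D+I$ rather than $D+2I$ in its off-diagonal blocks. First I would form the same Lagrange function $F(g,h,\lambda,\mu)$ as in the double-graph argument, now with $\Tilde{D}$ given by \eqref{05eqn:Tilde D of lexicographic product}, and compute the stationary equations $\partial F/\partial g=0$ and $\partial F/\partial h=0$. Because the coupling block is $D+I$, these read
\[
2Dg+2Dh+2h-2\lambda g-\mu\bm{1}=0, \qquad 2Dh+2Dg+2g-2\lambda h-\mu\bm{1}=0,
\]
together with the constraints $\langle g,g\rangle+\langle h,h\rangle=1$ and $\langle\bm{1},g\rangle+\langle\bm{1},h\rangle=0$. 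These describe $\mathcal{S}(\Tilde{D})$, and by Proposition \ref{02prop:QEC} the QE constant equals $\max\Lambda(\Tilde{D})$.

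Next I would subtract the two gradient equations. The $Dg$ and $Dh$ terms cancel and one is left with $2(\lambda+1)(h-g)=0$, so that either $\lambda=-1$ or $h=g$. Imposing $h=g$ collapses the whole system to the single vector equation $(4D+2I-2\lambda I)g=\mu\bm{1}$ subject to $\langle g,g\rangle=\tfrac12$ and $\langle\bm{1},g\rangle=0$, which is the exact analogue of \eqref{4eqn:D5} with $+2I$ replaced by $+I$. Comparing this reduced system with the defining system \eqref{4eqn:D6} for $\mathrm{QEC}(G)$, I would verify that the substitution $g=h=\tfrac{1}{\sqrt2}\,f_1$, $\lambda=2\lambda_1+1$, $\mu=\sqrt2\,\mu_1$ establishes a one-to-one correspondence between $\mathcal{S}(D)$ and $\{(f,f,\lambda,\mu)\in\mathcal{S}(\Tilde{D})\}$; the shift $+1$ (rather than $+2$) is precisely what the $D+I$ block produces. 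Taking maxima then yields $\max\Lambda(\Tilde{D})=2\mathrm{QEC}(G)+1$, which is \eqref{4eqn:QEC(lexicographic)}.

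The one place where the argument genuinely departs from the double-graph proof, and the step I expect to require the most care, is the disposal of the branch $\lambda=-1$. In Theorem \ref{4thm:QEC(Double)} the stray value was $-2$, comfortably below the universal lower bound $\mathrm{QEC}\ge-1$, so it could be dropped without comment. Here the stray value is exactly $-1$, which coincides with that bound. I would settle this by observing that the $h=g$ branch already contributes $2\mathrm{QEC}(G)+1\ge 2(-1)+1=-1$, so whether or not $\lambda=-1$ actually yields an admissible stationary point, it can never exceed the maximum coming from the $h=g$ branch and hence leaves $\max\Lambda(\Tilde{D})$ unchanged. With this caveat resolved, the remaining computation is routine.
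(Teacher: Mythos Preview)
Your proposal is correct and follows exactly the approach the paper intends: the paper does not spell out a proof but simply says ``after similar argument as in the case of the double graphs,'' and your write-up is precisely that parallel argument with $D+I$ in place of $D+2I$. Your extra care with the stray value $\lambda=-1$ (observing that the $h=g$ branch already yields $2\,\mathrm{QEC}(G)+1\ge-1$, so this branch cannot raise the maximum) is the one genuinely new wrinkle relative to the double-graph proof, and you handle it correctly.
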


\begin{remark}\normalfont
It follows from Theorem \ref{4thm:QEC(lexicographic)} that
$G\triangleright K_2$ is not of QE class
if and only if $\mathrm{QEC}(G)>-1/2$.
In this relation
characterizing finite connected graphs $G$ with
$\mathrm{QEC}(G)\le -1/2$ is an interesting question,
as is mentioned in Introduction.
\end{remark}

\begin{remark}\normalfont
The distance spectrum of $G\triangleright K_2$
is related to that of $G$ in a similar fashion
as in \eqref{4eqn:QEC(lexicographic)},
for details see \cite{Indulal-Gutman2008}.
\end{remark}

\section{Strongly Regular Graphs}
\label{sec:Strongly Regular Graphs}

A \textit{strongly regular graph} with parameters $(n,r,e,f)$
is an $r$-regular graph on $n$ vertices
such that any two adjacent vertices have $e$ common neighbors
and any two non-adjacent vertices have $f$ common neighbors.
By definition the complete graph $K_n$ and
the empty graph $\bar{K}_n$ are \textit{not} strongly regular.
Strongly regular graphs, tracing back to Bose \cite{Bose1963},
have been extensively studied from various points of view,
for example see \cite{Brouwer1984,Brouwer-Haemers2012,Dragos2004}
as well as the large list \cite{BrouwerList}.

In this section we are interested in the QE constant of
a strongly regular graph.
Note that for a connected strongly regular graph $G$ we have
$n\ge4$, $2\le r\le n-2$, $f\ge1$ and $\mathrm{diam}(G)=2$.

\begin{proposition}\label{05prop:QEC(SRG)}
Let $G$ be a connected strongly regular graph
with parameters $(n,r,e,f)$.
Then we have
\[
\mathrm{QEC}(G)
=-2-\lambda_{\min}(G)
=-2+\frac{(f-e)+\sqrt{(f-e)^2+4(r-f)}}{2}\,,
\]
where $\lambda_{\min}(G)$ is the minimal eigenvalue of
the adjacency matrix of $G$.
\end{proposition}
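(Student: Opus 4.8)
The plan is to reduce the QE constant of a strongly regular graph to its minimal adjacency eigenvalue via Proposition~\ref{02prop:QEC(regular and diam<=2}, and then to compute $\lambda_{\min}(G)$ explicitly from the strongly regular parameters. Since a connected strongly regular graph $G$ with parameters $(n,r,e,f)$ is $r$-regular with $\mathrm{diam}(G)=2$, the hypotheses of Proposition~\ref{02prop:QEC(regular and diam<=2} are met, giving at once
\[
\mathrm{QEC}(G)=-2-\lambda_{\min}(G).
\]
Thus the first equality is immediate and requires no new work; the entire content of the proposition lies in the second equality, namely in showing that
\[
\lambda_{\min}(G)=\frac{(e-f)-\sqrt{(f-e)^2+4(r-f)}}{2}.
\]

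To obtain this, I would invoke the well-known quadratic relation satisfied by the adjacency matrix of a strongly regular graph. The defining combinatorial conditions translate into the matrix identity
\[
A^2=rI+eA+f(J-I-A),
\]
which rearranges to $A^2-(e-f)A-(r-f)I=fJ$. Applying both sides to an eigenvector of $A$ orthogonal to $\bm{1}$ (the Perron eigenvector, since $A\bm{1}=r\bm{1}$ and $J$ annihilates the orthogonal complement of $\bm{1}$), one finds that every non-principal eigenvalue $\theta$ of $A$ satisfies the scalar quadratic
\[
\theta^2-(e-f)\theta-(r-f)=0.
\]
Its two roots are $\theta_\pm=\tfrac12\bigl((e-f)\pm\sqrt{(e-f)^2+4(r-f)}\bigr)$, and since $r-f>0$ for a connected strongly regular graph (indeed $f\ge1$ and $r>f$ hold under connectivity and $\mathrm{diam}(G)=2$), the discriminant is positive and the smaller root $\theta_-$ is negative. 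As $r$ itself is the largest eigenvalue, the minimal eigenvalue of $A$ must be $\theta_-$, and substituting $(e-f)^2=(f-e)^2$ yields exactly the claimed expression for $\lambda_{\min}(G)$.

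The only point needing care—and the place I would expect to be the main obstacle—is justifying that $\theta_-$ is genuinely the \emph{minimum} over all eigenvalues, i.e.\ that $r\ge\theta_+$ and that no eigenvalue smaller than $\theta_-$ exists. This follows because the spectrum of $A$ consists precisely of $r$ together with the two roots $\theta_\pm$ of the scalar quadratic (every eigenvector orthogonal to $\bm{1}$ yields a root of that quadratic, and $r$ is simple with eigenvector $\bm{1}$), so the three numbers $\{r,\theta_+,\theta_-\}$ exhaust $\mathrm{ev}(A)$; one then checks $\theta_-<\theta_+\le r$ using $r-f>0$ and the Perron--Frobenius maximality of $r$. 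Combining $\lambda_{\min}(G)=\theta_-$ with the first equality completes the proof.
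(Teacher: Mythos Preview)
Your proof is correct and follows essentially the same route as the paper's: invoke Proposition~\ref{02prop:QEC(regular and diam<=2} for the first equality, then extract the non-principal eigenvalues from the standard quadratic relation $A^2-(e-f)A-(r-f)I=fJ$ (the paper states the equivalent factored form $(A-r)(A^2+(f-e)A+(f-r)I)=0$). One small slip: the strict inequality $r>f$ fails for complete multipartite graphs $K_{t\times m}$ (where $r=f=(t-1)m$), but your argument does not actually need strictness, since $\theta_-\le\theta_+$ holds by definition and $\theta_+\le r$ follows from Perron--Frobenius regardless.
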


\begin{proof}
The first equality follows from
Proposition \ref{02prop:QEC(regular and diam<=2}.
For the second equality let $A$ be the adjacency matrix of $G$.
An elementary observation leads
\[
(A-rI)(A^2+(f-e)A+(f-r)I)=0,
\]
from which we obtain
\[
\lambda_{\min}(G)=
\frac{-(f-e)-\sqrt{(f-e)^2+4(r-f)}}{2}\,,
\]
for details see e.g., \cite{Brouwer-Haemers2012,Cvetkovic1980}.
\end{proof}

\begin{theorem}\label{05thm:QEC(SRG with min-ev=-2)}
Let $G$ be a connected strongly regular graph.
Then $\mathrm{QEC}(G)\le0$ (resp. $\mathrm{QEC}(G)\ge0$)
if and only if $\lambda_{\min}(G)\ge-2$
(resp. $\lambda_{\min}(G)\le -2$).
\end{theorem}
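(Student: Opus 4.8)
The plan is to reduce the entire statement to the exact identity $\mathrm{QEC}(G)=-2-\lambda_{\min}(G)$ already supplied by Proposition \ref{05prop:QEC(SRG)}. A connected strongly regular graph is by hypothesis regular with $\mathrm{diam}(G)=2$, so that proposition applies without modification and furnishes a single linear relation between the QE constant and the minimal adjacency eigenvalue. Everything else is a sign manipulation.

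The first step is simply to invoke this identity. The second step is to rearrange the two inequalities. For the first equivalence, $\mathrm{QEC}(G)\le0$ reads $-2-\lambda_{\min}(G)\le0$, which is exactly $\lambda_{\min}(G)\ge-2$; conversely $\lambda_{\min}(G)\ge-2$ forces $\mathrm{QEC}(G)=-2-\lambda_{\min}(G)\le0$. The second equivalence is the mirror-image computation: $\mathrm{QEC}(G)\ge0$ is $-2-\lambda_{\min}(G)\ge0$, equivalently $\lambda_{\min}(G)\le-2$. Since the linear relation is an equality, both biconditionals hold with no case distinction, and the boundary value $\lambda_{\min}(G)=-2$ corresponds precisely to $\mathrm{QEC}(G)=0$ in both readings.

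Because all the substance is packaged inside Proposition \ref{05prop:QEC(SRG)}, there is no genuine obstacle: the result is a corollary whose purpose is to restate the QE-class condition $\mathrm{QEC}(G)\le0$ in spectral terms, thereby connecting it to the classification of graphs with smallest adjacency eigenvalue at least $-2$ mentioned in the Introduction. The only point worth a sentence of care is to record that the hypotheses of Proposition \ref{05prop:QEC(SRG)} (regularity and $\mathrm{diam}(G)=2$) are automatically met for a connected strongly regular graph, so that the invocation is legitimate.
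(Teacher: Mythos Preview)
Your proof is correct and is exactly the paper's approach: the paper's own proof consists of the single sentence ``A direct consequence from Proposition \ref{05prop:QEC(SRG)},'' and you have merely spelled out the trivial sign manipulation that this entails. (One small note: Proposition \ref{05prop:QEC(SRG)} is already stated for connected strongly regular graphs, so there is no need to separately verify regularity and $\mathrm{diam}(G)=2$ before invoking it.)
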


\begin{proof}
A direct consequence from Proposition \ref{05prop:QEC(SRG)}.
\end{proof}

Graphs with smallest eigenvalue at least $-2$ have been
extensively studied after the famous classification
\cite{Cameron1976}.
Here we focus on the following significant result.

\begin{proposition}[\cite{Brouwer-Cohen-Neumaier1989}]
\label{05prop:BCN}
Let $G$ be a connected regular graph on $n\ge1$ vertices.
If $\lambda_{\min}(G)>-2$,
then either $G$ is a complete graph $K_n$ or
$G$ is a cycle $C_n$ with odd $n\ge3$.
\end{proposition}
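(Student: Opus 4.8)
The plan is to combine Cauchy eigenvalue interlacing with the representation theory of graphs whose least eigenvalue is at least $-2$. First I would record the interlacing principle: if $H$ is an induced subgraph of $G$, then the adjacency matrix of $H$ is a principal submatrix of that of $G$, so $\lambda_{\min}(G)\le\lambda_{\min}(H)$. Hence the hypothesis $\lambda_{\min}(G)>-2$ is inherited by every induced subgraph, and in particular $G$ can contain no induced subgraph with least eigenvalue $\le -2$. I would then dispose of the low-degree cases directly: a connected regular graph of degree $r\le 2$ is $K_1$, $K_2$, or a cycle $C_n$, and the elementary computation recalled in Example \ref{exa-1} gives $\lambda_{\min}(C_n)=-2$ when $n$ is even and $\lambda_{\min}(C_n)=-2+4\sin^2(\pi/2n)>-2$ when $n$ is odd. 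Thus in degree $\le 2$ precisely $K_1$, $K_2$ and the odd cycles survive, matching the two families in the statement.

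The substance is the case $r\ge 3$, where I must show that $G$ is complete. Here the key reformulation is that $\lambda_{\min}(G)>-2$ is equivalent to $A+2I$ being positive definite. Since $A+2I$ is an integral matrix with diagonal entries $2$ and off-diagonal entries in $\{0,1\}$, it is the Gram matrix of $n$ \emph{linearly independent} vectors of squared length $2$ whose pairwise inner products lie in $\{0,1\}$, that is, of a linearly independent system of roots in a positive definite root lattice. By the Cameron--Goethals--Seidel--Shult representation of graphs with least eigenvalue $\ge -2$ (see \cite{Cameron1976, Brouwer-Cohen-Neumaier1989}), such a $G$ is either a generalized line graph or one of finitely many exceptional graphs representable in $E_8$; in the latter case linear independence of the representing roots forces $n\le 8$, leaving only a finite check.

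Imposing regularity then collapses the possibilities. The strictness of the inequality is exactly the condition that the representing roots be linearly independent, which for a generalized line graph $L(H;\,\cdot\,)$ rules out the cocktail-party petals (these contribute eigenvalue $-2$) and forces the base multigraph $H$ to be a tree or an odd-unicyclic graph. A regular line graph $L(H)$ with $H$ a tree forces $H$ to be a star, whence $G$ is complete, while $H$ odd-unicyclic and regular forces $H=C_n$ with $n$ odd, already covered above; the remaining exceptional graphs all have $\lambda_{\min}=-2$ and are excluded by strictness. The main obstacle is precisely this last reduction: verifying, for regular $G$ of degree $\ge 3$, that no non-complete graph other than an odd cycle admits a linearly independent root representation is the technical heart of the argument, and it is exactly the classification of regular graphs with least eigenvalue $-2$ that is being invoked. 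This is why the result is cited from \cite{Brouwer-Cohen-Neumaier1989} rather than reproved in full here.
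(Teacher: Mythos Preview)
The paper does not supply its own proof of this proposition: it is quoted verbatim as a known result from \cite{Brouwer-Cohen-Neumaier1989}, with no argument given. So there is nothing in the paper to compare your proposal against.

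That said, your sketch is the standard route to this classification and is broadly correct, and you yourself recognise at the end that the result is being cited rather than reproved. A couple of minor points of imprecision: in the odd-unicyclic case you write ``$H$ odd-unicyclic and regular forces $H=C_n$'', but what you need is $L(H)$ regular, and then $L(C_n)=C_n$ has degree $2$, so this case simply does not arise for $r\ge 3$ rather than being ``already covered''. Also, the assertion that ``the remaining exceptional graphs all have $\lambda_{\min}=-2$'' is too quick: what you mean is that among the finitely many \emph{regular} exceptional graphs of degree $\ge 3$ represented by linearly independent roots in $E_8$, none survive except complete graphs, and this is exactly the finite check you allude to earlier. These are wording issues rather than genuine gaps; the structure of the argument---interlacing, positive-definiteness of $A+2I$, root representation via \cite{Cameron1976}, and regularity collapsing the generalized-line-graph and exceptional cases---is the correct one.
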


\begin{theorem}\label{5-thm-1}
For any connected strong regular graph $\Gamma$ except $C_5$
we have
\begin{equation}\label{05eqn:in 5.9}
\mathrm{QEC}(\Gamma+K_m)\geq0,
\qquad
\mathrm{QEC}(\Gamma+\bar{K}_m)\geq0,
\qquad m\ge1.
\end{equation}
\end{theorem}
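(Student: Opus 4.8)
The plan is to reduce both inequalities in \eqref{05eqn:in 5.9} to the single spectral statement $\lambda_{\min}(\Gamma)\le-2$, and then read off the conclusion from Corollary \ref{03cor:bound QEC(G_1+G_2)}. Indeed, applying that corollary with $G_1=\Gamma$ and $G_2=K_m$ (respectively $G_2=\bar{K}_m$) yields at once
\[
\mathrm{QEC}(\Gamma+K_m)\ge -2-\lambda_{\min}(\Gamma),
\qquad
\mathrm{QEC}(\Gamma+\bar{K}_m)\ge -2-\lambda_{\min}(\Gamma).
\]
Hence both quantities in \eqref{05eqn:in 5.9} are nonnegative as soon as $\lambda_{\min}(\Gamma)\le-2$, so the whole theorem follows once this one bound is secured. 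Note that this step is valid for every $m\ge1$, since $K_m$ and $\bar{K}_m$ are regular graphs on $m\ge1$ vertices, matching the hypotheses of Theorem \ref{03thm:QEC(G_1+G_2)}.

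To establish $\lambda_{\min}(\Gamma)\le-2$ I would argue by contradiction, assuming $\lambda_{\min}(\Gamma)>-2$. As $\Gamma$ is connected and regular, Proposition \ref{05prop:BCN} forces $\Gamma$ to be either a complete graph $K_n$ or an odd cycle $C_n$. The complete case is excluded immediately, since by the standing convention of this section no complete graph is strongly regular. For the odd-cycle case I would show that the only strongly regular odd cycle is $C_5$: an odd cycle $C_n$ has diameter $(n-1)/2$, so $C_3=K_3$ is complete (diameter $1$), $C_5$ has diameter $2$, and every odd $C_n$ with $n\ge7$ has diameter at least $3$. In the latter graphs a non-adjacent pair at distance $2$ has exactly one common neighbour while a non-adjacent pair at distance $\ge3$ has none, so the common-neighbour count over non-adjacent pairs is not constant and $C_n$ fails to be strongly regular. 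Consequently a connected strongly regular $\Gamma$ with $\lambda_{\min}(\Gamma)>-2$ must equal $C_5$, contradicting the hypothesis $\Gamma\neq C_5$. Therefore $\lambda_{\min}(\Gamma)\le-2$, and combining with the displayed lower bounds completes the proof.

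The only step carrying any mathematical content is the classification of strongly regular odd cycles, and even that reduces to the one-line common-neighbour count just described; everything else is a direct application of Corollary \ref{03cor:bound QEC(G_1+G_2)} together with the Brouwer--Cohen--Neumaier theorem recalled in Proposition \ref{05prop:BCN}. I therefore anticipate no genuine obstacle. The real content of the theorem is simply that $C_5$ is the \emph{unique} connected strongly regular graph whose smallest adjacency eigenvalue exceeds $-2$, which is exactly why it is the sole exception appearing in the statement.
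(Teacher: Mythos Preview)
Your proof is correct and follows essentially the same route as the paper: reduce to $\lambda_{\min}(\Gamma)\le-2$ via Corollary~\ref{03cor:bound QEC(G_1+G_2)}, then invoke Proposition~\ref{05prop:BCN} and rule out complete graphs and odd cycles other than $C_5$. The only difference is that you spell out the common-neighbour argument showing $C_n$ with odd $n\ge7$ is not strongly regular, whereas the paper simply asserts that among the candidates only $C_5$ is strongly regular.
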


\begin{proof}
Let $\Gamma$ be a connected strong regular graph
and suppose that $\lambda_{\min}(\Gamma)>-2$.
It follows from Proposition \ref{05prop:BCN} that
either $\Gamma$ is a complete graph $K_n$ or
$\Gamma$ is a cycle $C_n$ with odd $n\ge3$.
Among them only $C_5$ is strongly regular.
Therefore,
for any connected strong regular graph $\Gamma$ except $C_5$
we have $\lambda_{\min}(\Gamma)\le -2$.
For such a graph $\Gamma$,
with the help of Corollary \ref{03cor:bound QEC(G_1+G_2)}
we see that
\[
\mathrm{QEC}(\Gamma+K_m)
\ge-2-\lambda_{\min}(\Gamma)\ge0,
\qquad m\ge1.
\]
Similarly, $\mathrm{QEC}(\Gamma+\bar{K}_m)\ge0$ is shown.
\end{proof}

\begin{example}[cycle $C_5$]
\normalfont
Already studied in Example \ref{exa-1}.
First note that
\[
\lambda_{\min}(C_5)=-\frac{1+\sqrt5}{2}>-2.
\]
We have
\begin{align*}
&\mathrm{QEC}(C_5+K_1)
=\mathrm{QEC}(C_5+K_2)
=-\dfrac{3-\sqrt5}{2}<0, \\[3pt]
&\mathrm{QEC}(C_5+K_m)=\dfrac{m-5}{m+5}\,,
\qquad m\ge3, \\[3pt]
&\mathrm{QEC}(C_5+\bar{K}_m)=\frac{6m-10}{m+5}\,,
\qquad m\ge2.
\end{align*}
\end{example}

For strongly regular graphs the following result is very interesting.
For definitions and properties of the graphs
in the statement, see e.g.,
\cite{Brouwer-Haemers2012,Cameron1976,Dragos2004}.

\begin{proposition}[\cite{Cheng-Greaves-Koolen2018}]
\label{05prop:CGK}
Let $\Gamma$ be a connected, coconnected,
strongly regular graph with
$\lambda_{\min}(\Gamma)\ge-2$.
Then $\Gamma$ is either a $5$-cycle $C_5$,
a triangular graph $T(n)$ with $n\ge5$,
an $(n\times n)$-grid with $n\ge3$,
the Petersen graph,
the Shrikhande graph,
the Clebsch graph,
the Schl\"afli graph,
or one of the three Chang graphs.
\end{proposition}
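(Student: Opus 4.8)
The plan is to deduce the list from the structure theory of graphs with smallest eigenvalue at least $-2$ and then to extract those that are strongly regular and coconnected. First I would split according to whether $\lambda_{\min}(\Gamma)>-2$ or $\lambda_{\min}(\Gamma)=-2$. The strict inequality is handled by Proposition \ref{05prop:BCN}, which forces $\Gamma$ to be a complete graph $K_n$ or an odd cycle $C_n$; here $K_n$ is neither strongly regular nor coconnected, an odd cycle is strongly regular only for $n=5$, and $C_5$ is self-complementary and hence coconnected. Thus this case contributes exactly the graph $C_5$.

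The essential case is $\lambda_{\min}(\Gamma)=-2$, where $A+2I$ is positive semidefinite and is therefore the Gram matrix of vectors $\{v_x\}_{x\in V}$ with $\langle v_x,v_x\rangle=2$ and $\langle v_x,v_y\rangle=A_{xy}\in\{0,1\}$ for $x\neq y$. These vectors have squared length $2$ and pairwise angles $60^\circ$ or $90^\circ$, so they span a simply laced root system, whose irreducible components must be of type $A_n$, $D_n$, $E_6$, $E_7$ or $E_8$. Invoking the root-system classification of graphs with least eigenvalue $-2$ \cite{Cameron1976}, $\Gamma$ is then either a \emph{generalized line graph}, representable in some $A_n$ or $D_n$, or one of the finitely many \emph{exceptional} graphs representable in $E_8$.

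It remains to select the strongly regular, coconnected graphs from these two families. The strongly regular generalized line graphs are classically the triangular graphs $T(n)=L(K_n)$, the lattice graphs $L(K_{n,n})$ (the $(n\times n)$-grids), and the cocktail party graphs $K_{n\times2}$. \emph{Coconnectedness} is precisely what prunes this list to the stated form: the complement of $K_{n\times2}$ equals $nK_2$ and is disconnected, while $T(4)\cong K_{3\times2}$ and the grid $L(K_{2,2})\cong C_4$ likewise have disconnected complements, leaving $T(n)$ with $n\ge5$ and grids with $n\ge3$. For the exceptional family I would use the classical bound that a graph representable in $E_8$ has at most $36$ vertices \cite{Brouwer-Cohen-Neumaier1989}, which reduces the analysis to a finite list; imposing strong regularity then singles out the Petersen, Shrikhande, Clebsch and Schl\"afli graphs together with the three Chang graphs, each of which is coconnected with $\lambda_{\min}=-2$.

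I expect the exceptional $E_8$ case to be the main obstacle. The difficulty is twofold: organizing the finite search for admissible configurations of norm-$\sqrt2$ vectors inside $E_8$, and separating the graphs that share strong-regularity parameters but are nonisomorphic, most conspicuously the Shrikhande graph versus the $4\times4$-grid $L(K_{4,4})$ and the three Chang graphs versus the triangular graph $T(8)$. The parameter set alone cannot distinguish these, so finer invariants---such as the local structure around a vertex or the detailed multiplicity pattern of the adjacency spectrum---must be brought in, and it is in this finite but delicate enumeration that the substance of \cite{Cheng-Greaves-Koolen2018} resides.
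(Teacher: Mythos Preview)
The paper does not prove this proposition at all: it is quoted as a known result from \cite{Cheng-Greaves-Koolen2018} (with antecedents in \cite{Cameron1976,Brouwer-Cohen-Neumaier1989}) and used as a black box to generate the list of Examples \ref{ex:triangular graphs}--\ref{ex:Chang graphs}. There is therefore no proof in the paper to compare your proposal against.

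That said, your outline follows the standard route to this classification and is broadly sound as a sketch. A couple of small corrections are worth noting. First, your handling of the strict case $\lambda_{\min}>-2$ via Proposition \ref{05prop:BCN} is fine, but you should not say that $K_n$ ``is neither strongly regular nor coconnected'': the relevant point is simply that complete graphs are excluded by the definition of strongly regular adopted in the paper. Second, your remark that the exceptional $E_8$ enumeration is ``where the substance of \cite{Cheng-Greaves-Koolen2018} resides'' is not accurate: the classification of connected strongly regular graphs with $\lambda_{\min}\ge-2$ is much older (it is essentially in Seidel's work and is recorded, for example, in \cite{Brouwer-Cohen-Neumaier1989}); the paper \cite{Cheng-Greaves-Koolen2018} is cited here only as a convenient recent reference for the statement. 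Finally, your proposal is explicitly a plan rather than a proof---you correctly identify that the finite $E_8$ search is the real work, but you do not carry it out---so what you have written would not stand as a self-contained proof even if the paper required one.
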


Below we list the QE constants of the joins of
a graph mentioned in Proposition \ref{05prop:CGK}
with $K_n$ and $\bar{K}_n$.
The calculation is routine application of
Theorem \ref{03thm:QEC(G_1+G_2)}.

\begin{example}[triangular graphs]\label{ex:triangular graphs}
\normalfont
The line graph of the complete graph $K_n$ is
called a \textit{triangular graph}
and denoted by $T(n)$ where $n\ge2$.
For $n\ge4$ the triangular graph $T(n)$ is strongly regular with
parameters:
\[
\bigg(\frac{n(n-1)}{2}, 2(n-2), n-2,4\bigg).
\]
Moreover,
\begin{align*}
&\mathrm{ev}(T(n))=\{2(n-2)^1,\,\,
(n-4)^{n-1},\,\, (-2)^{n(n-3)/2}\},
\\
&\mathrm{QEC}(T(n))=0.
\end{align*}
For graph joins we have
\begin{align*}
&\mathrm{QEC}(T(n)+K_m)=
\max\bigg\{0,\,
\frac{(n-1)(nm-n-4m)}{n(n-1)+2m}\bigg\},\\[3pt]
&\mathrm{QEC}(T(n)+\bar{K}_m)=
\max\bigg\{0,\,
\frac{2(n-1)(nm-n-2m)}{n(n-1)+2m}\bigg\},
\end{align*}
where $n\ge4$ and $m\ge1$.
Note that $T(4)\cong K_{2,2,2}$ and hence
$\overline{T(4)}\cong 3K_2$ is not connected.
\end{example}

\begin{example}[grids]\label{ex:grids}
\normalfont
For $n\ge2$ the Cartesian product $K_n\times K_n$
is called an \textit{$(n\times n)$-grid}.
The $(n\times n)$-grid is strongly regular with
parameters $(n^2,2(n-1),n-2,2)$.
Moreover,
\begin{align*}
&\mathrm{ev}(K_n\times K_n)
=\{2(n-1)^{1},(n-2)^{2n-2}, \,\, (-2)^{n^2-2n+1}\},\\
&\mathrm{QEC}(K_n\times K_n)=0.
\end{align*}
For graph joins we have
\begin{align*}
&\mathrm{QEC}((K_n\times K_n)+K_m)=
\max\bigg\{0,\,
\frac{n(nm-n-2m)}{n^2+m}\bigg\},
\quad n\ge2,\,\, m\ge1, \\[3pt]
&\mathrm{QEC}((K_n\times K_n)+\bar{K}_m)
=\frac{2n(nm-n-m)}{n^2+m}\,,
\quad n\ge2,\,\, m\ge2.
\end{align*}
Note that $K_2\times K_2\cong C_4\cong K_{2,2}$ and hence
$\overline{K_2\times K_2}\cong 2K_2$ is not connected.
\end{example}

\begin{example}[Petersen graph]\label{ex:Petersen graph}
\normalfont
The Petersen graph $\Gamma_1$ is
the unique strongly regular graph with parameters $(10,3,0,1)$.
We have
\[
\mathrm{ev}(\Gamma_1)=\{3^1, 1^5, (-2)^4\},
\qquad
\mathrm{QEC}(\Gamma_1)=0.
\]
For graph joins we have
\begin{align*}
&\mathrm{QEC}(\Gamma_1+K_1)=\mathrm{QEC}(\Gamma_1+K_2)=0, \\[3pt]
&\mathrm{QEC}(\Gamma_1+K_m)=\frac{5m-10}{m+10}>0, \quad m\geq3, \\[3pt]
&\mathrm{QEC}(\Gamma_1+\bar{K}_m)=\frac{15m-20}{m+10}>0,
\quad m\geq2.
\end{align*}
\end{example}

\begin{example}[Shrikhande graph]\normalfont
There are two strongly regular graphs with
parameters $(16, 6, 2, 2)$.
One is $(4\times 4)$-grid and the other is
the Shrikhande graph $\Gamma_2$.
We have
\[
\mathrm{ev}(\Gamma_2)=\{6^1, 2^6, (-2)^9\},
\qquad
\mathrm{QEC}(\Gamma_2)=0.
\]
For graph joins we have
\begin{align*}
&\mathrm{QEC}(\Gamma_2+K_1)=\mathrm{QEC}(\Gamma_2+K_2)=0, \\[3pt]
&\mathrm{QEC}(\Gamma_2+K_m)=\frac{8m-16}{m+16}>0, \quad m\geq3,
\\[3pt]
&\mathrm{QEC}(\Gamma_2+\bar{K}_m)=\frac{24m-32}{m+16}>0,
\quad m\geq2.
\end{align*}
\end{example}

\begin{example}[Clebsch graph]\normalfont
The ($10$-regular) Clebsch graph $\Gamma_3$ is
the unique strongly regular graph with parameters $(16,10,6,6)$.
We have
\[
\mathrm{ev}(\Gamma_3)=\{10^1, 2^5, (-2)^{10}\},
\qquad
\mathrm{QEC}(\Gamma_3)=0.
\]
For graph joins we have
\begin{align*}
&\mathrm{QEC}(\Gamma_3+K_m)=0, \quad 1\le m\le 4, \\[3pt]
&\mathrm{QEC}(\Gamma_3+K_m)=\frac{4m-16}{m+16}>0, \quad m\geq5,\\[3pt]
&\mathrm{QEC}(\Gamma_3+\bar{K}_m)=\frac{20m-32}{m+16}>0,
\quad m\geq2.
\end{align*}
\end{example}

\begin{example}[Schl\"{a}fli graph]\normalfont
The Schl\"{a}fli graph $\Gamma_4$ is
the unique strongly regular graph with parameters $(27,16,10,8)$.
We have
\[
\mathrm{ev}(\Gamma_4)=\{16^1, 4^6, (-2)^{20}\},
\qquad
\mathrm{QEC}(\Gamma_4)=0.
\]
For graph joins we have
\begin{align*}
&\mathrm{QEC}(\Gamma_4+K_1)=\mathrm{QEC}(\Gamma_4+K_2)
=\mathrm{QEC}(\Gamma_4+K_3)=0, \\[3pt]
&\mathrm{QEC}(\Gamma_4+K_m)=\frac{9m-27}{m+27}>0,
\quad m\geq4, \\[3pt]
&\mathrm{QEC}(\Gamma_4+\bar{K}_m)=\frac{36m-54}{m+27}>0,
\quad m\geq2.
\end{align*}
\end{example}

\begin{example}[Chang graphs]\label{ex:Chang graphs}
\normalfont
There are four strongly regular graphs with parameters $(28,12,6,4)$.
One is the triangular graph $T(8)$, i.e.,
the line graph of $K_8$.
Each of the other three is called a Chang graph.
For a Chang graph $\Gamma_5$ we have
\[
\mathrm{ev}(\Gamma_5)=\{12^1, 4^7, (-2)^{20}\},
\qquad
\mathrm{QEC}(\Gamma_5)=0.
\]
For graph joins we have
\begin{align*}
&\mathrm{QEC}(\Gamma_5+K_1)=\mathrm{QEC}(\Gamma_5+K_2)=0, \\[3pt]
&\mathrm{QEC}(\Gamma_5+K_m)=\frac{14m-28}{m+28}>0,
\quad m\geq3, \\[3pt]
&\mathrm{QEC}(\Gamma_5+\bar{K}_m)=\frac{42m-56}{m+28}>0,
\quad m\geq2.
\end{align*}
\end{example}

Every strongly regular graph in the above Examples
\ref{ex:triangular graphs}--\ref{ex:Chang graphs}
has the minimal eigenvalue $-2$ and hence the zero QE constant.
We add some more examples with positive QE constants.

\begin{example}[Hoffman-Singleton graph]
\normalfont
The Hoffman-Singleton graph $\Gamma_6$ is
the unique strongly regular graph with
parameters $(50,7, 0,1)$.
We have
\[
\mathrm{ev}(\Gamma_6)=\{7^1, 2^{28}, (-3)^{21}\},
\qquad
\mathrm{QEC}(\Gamma_6)=1.
\]
For graph joins we have
\begin{align*}
&\mathrm{QEC}(\Gamma_6+{K}_m)=1, \quad m=1,2, \\[3pt]
&\mathrm{QEC}(\Gamma_6+{K}_m)=\frac{41m-50}{m+50}>0,
\quad m\geq3, \\[3pt]
&\mathrm{QEC}(\Gamma_6+\bar{K}_m)=\frac{91m-100}{m+50}>0,
\quad m\geq2.
\end{align*}
\end{example}

\begin{example}[Higman-Sims graph]
\normalfont
The Higman-Sims graph $\Gamma_{7}$ is the
unique strongly regular graph  with parameters $(100,22,0,6)$.
We have
\[
\mathrm{ev}(\Gamma_7)=\{22^1, 2^{77}, (-8)^{22}\},
\qquad
\mathrm{QEC}(\Gamma_7)=6.
\]
For graph joins we have
\begin{align*}
&\mathrm{QEC}(\Gamma_7+K_m)=6, \quad 1\leq m\leq9, \\[3pt]
&\mathrm{QEC}(\Gamma_7+K_m)=\frac{76m-100}{m+100}>0, \quad m\geq10,
\\[3pt]
& \mathrm{QEC}(\Gamma_7+\bar{K}_m)=6, \quad 1\le m\le 4, \\[3pt]
&\mathrm{QEC}(\Gamma_7+\bar{K}_m)
=\frac{176m-200}{m+100}>0,
\quad m\geq5.
\end{align*}
\end{example}

\begin{example}[Suzuki graph]
\normalfont
The Suzuki graph $\Gamma_{8}$ is a rank $3$
strongly regular graph with parameters $(1782,416,100,96)$.
We have
\[
\mathrm{ev}(\Gamma_{8})=\{416^1, 20^{780}, (-16)^{1001}\},
\qquad
\mathrm{QEC}(\Gamma_{8})=14.
\]
For graph joins we have
\begin{align*}
&\mathrm{QEC}(\Gamma_{8}+K_m)=14, \quad 1\leq m\leq19, \\[3pt]
&\mathrm{QEC}(\Gamma_{8}+K_m)=\frac{1364m-1782}{m+1782}>0,
\quad m\geq20.\\[3pt]
&\mathrm{QEC}(\Gamma_{8}+\bar{K}_m)=14,  \quad 1\leq m\leq9, \\[3pt]
&\mathrm{QEC}(\Gamma_{8}+\bar{K}_m)=\frac{3146m-3564}{m+1782}>0,
\quad m\geq10.
\end{align*}
\end{example}

\section*{Declarations}

\textbf{Conflicts of interest} No competing interests.\\

\textbf{Availability of Data and Material} Not applicable.

\end{document}